\providecommand{\U}[1]{\protect\rule{.1in}{.1in}}
\newtheorem{theorem}{Theorem}[section]
\newtheorem{proposition}[theorem]{Proposition}
\newtheorem{corollary}[theorem]{Corollary}
\newtheorem{example}[theorem]{Example}
\newtheorem{examples}[theorem]{Examples}
\newtheorem{remarks}[theorem]{Remarks}
\newtheorem{lemma}[theorem]{Lemma}
\newtheorem{final remark}[theorem]{Final Remark}
\newtheorem{definition}[theorem]{Definition}
\newcommand{\sol}[1]{\text{sol}(#1)}
\newcommand{\cvfa}{\overset{|\sigma|(E,E^{*})}{\longrightarrow}}
\newcommand{\cvf}{\overset{\omega}{\rightarrow}}
\newcommand {\cvfe} {\overset{\omega^\ast}{\rightarrow}}
\newcommand {\R}{\mathbb{R}}
\newcommand {\N} {\mathbb{N}}
\newcommand{\norma}[1]{\| #1 \|}
\newcommand{\conj}[2]{\left \{ {#1} \, : \, {#2} \right \}}
\begin{document}

\title{On norm-attaining positive operators between Banach lattices}
\author{Jos\'e Lucas P. Luiz \thanks{Supported by Fapemig Grant APQ-01853-23.} \, and  Vinícius C. C. Miranda \thanks{Supported by FAPESP grant and 2023/12916-1 Fapemig Grant APQ-01853-23 \newline 2020 Mathematics Subject Classification: 46B42, 46B28; 47B65
\newline Keywords: absolutely James boundaries; absolute weak topology;norm-attatining positive operators; positive weak maximizing property}}
\date{}
\maketitle

\begin{abstract}
    In this paper we study the norm-attainment of positive operators between Banach lattices. By considering an absolute version of James boundaries, we prove that: If $E$ is a reflexive Banach lattice whose order is given by a basis and $F$ is a Dedekind complete Banach lattice, then every positive operator from $E$ to $F$ is compact if and only if every positive operator from $E$ to $F$ attains its norm. An analogue result considering that $E$ is reflexive and the order in $F$ is continuous and given by a basis was proven. We applied our result to study a positive version of the weak maximizing property.
\end{abstract}

\section{Introduction}

A classical problem in Functional Analysis consists in studying the norm attainment of bounded linear operators between Banach spaces (see, e.g., \cite{acost, sheldon, dantjungrold, dantascerv, jung, lind}). One of the most known results in this direction is the James theorem which states that a Banach space $X$ is reflexive if and only if every bounded linear functional on $X$ attains its norm. Recently, by generalizing Holub's and Mujica's results (see \cite{holub, mujica}), S. Dantas, M. Jung and G. Martínez-Cervantes proved that if $X$ and $Y$ are Banach spaces such that $X$ is reflexive and the pair $(X, Y)$ satisfies the bounded compact approximation property, then every bounded linear operator from $X$ to $Y$ is compact if and only if every bounded linear operator from $X$ to $Y$ attains its norm (see \cite[Theorem B]{sheldon}).

In the Banach lattice setting, the study of norm-attaining positive linear functionals has appeared, indirectly, in the last decade. For instance, D. Ji, B. Lee, Q. Bu \cite[Lemma 3.5]{jileebu} and T. Oikhberg, M. A. Tursi \cite[Proposition 19.27]{oikhbergtursi} proved, separately, the lattice version of James theorem: An order continuous Banach latice $E$ is reflexive if and only if every positive linear functional on $E$ attains its norm. Moreover, S. Dantas, G. Martínez-Cervantes, J. D. Rodríguez-Abellán and A. Rueda-Zoca studied norm-attaining lattice-homomorphisms \cite{dantascerv}. It is also worth mentioning the recent work \cite{acosta} from M. D. Acosta and M. Soleimani-Mourchehkhorti  that studies the Bishop-Phelps-Ballobás property for positive linear functionals on Banach lattices. It is then natural to seek for a lattice version of \cite[Theorem B]{sheldon}. In our way to prove this result, we realized that some well known results on the Banach space theory have no known analogues in the Banach lattice setting, so we decided to introduce and develop some necessary results about absolute versions of James boundaries and operator topologies in Section 2. For instance, an absolute version of the Pfitzner's theorem (see \cite[Theorem 3.121]{fabian} or \cite{pfitzner}) was obtained considering absolutely weakly sequentially compact spaces.
In the end of Section 2, we proved that for every reflexive Banach lattice $E$ whose order is given by a basis and every Dedekind complete Banach lattice $F$, then every positive operator from $E$ to $F$ is compact if and only if every positive operator from $E$ to $F$ attains its norm. An analogue, considering that the order in $F$ is continuous and given by a basis was proven in the same statement.

As an interesting application of \cite[Theorem B]{sheldon} is the characterization of a Banach space with the Schur property in therms of the so-called weak maximizing property (see \cite[Theorem 3.5]{dantasjungcerv}), we explore an analogue application for our result. Following the line of investigation in the Banach lattice theory that consists in studying positive versions of well-known properties in Banach spaces, we define in Section 3 a positive version of the weak maximizing property introduced by R. Aron, D. García, D. Pellegrino and E. Teixeira in \cite{arongarciapelteix}. We investigate this new property by proving some expected results and providing examples. In the end of Section 3, we apply the result obtained in Section 2 to prove that if a Banach lattice $F$ with order continuous norm fails the positive Schur property, then there exists a reflexive Banach lattice $E$ such that the pair $(E, F)$ fails the positive weak maximizing property.

We refer the reader to \cite{alip, meyer} for background on Banach lattices and to \cite{fabian} for Banach space theory. Throughout this paper $X, Y$ denote Banach spaces and $E, F$ denote Banach lattices. 
We denote by $B_X, S_X$ and $X^\ast$ the closed unit ball, the unit sphere and the topological dual of $X$, respectively. For a subset $A \subset E$, $\text{co}(A)$, $\overline{\text{co}}(A)$ and $A^+$  denotes, respectively, the convex hull of $A$, the closed convex hull of $A$ and the positive elements of $A$. As usual, the weak topology on a Banach space $X$ shall be denoted by $\sigma(X,X^{*})$ or $\omega$, and the weak$^*$ topology on $X^*$ by $\sigma(X^{*},X)$ or $\omega^*$.

\section{Norm attaining positive operators}

Let $X$ be a Banach space and $B \subset A \subset X^\ast$. We recall that $B$ is called 
a boundary of $A$ if for each $x \in X$ there exists $x_0^\ast \in B$ such that $\displaystyle x_0^\ast(x) = \sup_{x^\ast \in A} x^\ast(x)$. In particular, $B \subset B_{X^\ast}$ is called a James boundary of $X$ if it is a boundary of the unit ball $B_{X^\ast}$, that is for each $x \in X$, exists $x_0^\ast \in B$ such that $\displaystyle x_0^\ast(x) = \sup_{x^\ast \in B_{X^\ast}} x^\ast (x)$ (see \cite[Definition 3.118]{fabian}). The set of all extreme points of $B_{X^\ast}$ is known to be an example of a James boundary set (see \cite[Fact 3.119]{fabian}). Now, we define the class of absolutely James boundaries sets in Banach lattices:

\begin{definition}
    Let $E$ be a Banach lattice and $B \subset A \subset E^\ast$. We say that $B$ is an \textbf{absolutely boundary} of $A$ if for every $x \in E$ there exists $x_0^\ast \in B$ such that $\displaystyle |x_0^\ast|(|x|) = \sup_{x^\ast \in A} |x^\ast|(|x|)$. In particular, $B \subset B_{E^\ast}$ is called an \textbf{absolutely James boundary} of $E$ if it is a absolutely boundary of the unit ball $B_{E^\ast}$, that is for each $x \in E$, exists $x_0^\ast \in B$ such that $\displaystyle |x_0^\ast|(|x|) = \sup_{x^\ast \in B_{E^\ast}} |x^\ast| (|x|)$. 
\end{definition}

The following is an easy characterization of absolutely James boundaries sets.

\begin{proposition}
    For a Banach lattice $E$ and $B \subset B_{E^\ast}$, the following are equivalent: \\
    {\rm (1)} $B$ is an absolutely James boundary of $E$. \\
    {\rm (2)} For every $x \in E$ there exists $x_0^\ast \in B$ such that $|x_0^\ast|(|x|) = \norma{x}$. \\
    {\rm (3)} For every $x \in S_E$ there exists $x_0^\ast \in B$ such that $|x_0^\ast|(|x|) = 1$. 
\end{proposition}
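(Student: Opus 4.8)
The plan is to establish the cycle of implications $(1)\Rightarrow(2)\Rightarrow(3)\Rightarrow(1)$, relying throughout on the elementary duality fact that for every $x\in E$ one has $\norma{x}=\sup_{x^\ast\in B_{E^\ast}}|x^\ast(x)|$, together with the pointwise inequality $|x^\ast(x)|\le |x^\ast|(|x|)$ valid in any Banach lattice. The key auxiliary observation, which I would record first, is that for a fixed $x\in E$ the supremum over the whole dual ball is actually computed by the absolute values: I claim $\sup_{x^\ast\in B_{E^\ast}}|x^\ast|(|x|)=\norma{x}$. Indeed, the inequality $\le$ is immediate since $|x^\ast|\in B_{E^\ast}$ whenever $x^\ast\in B_{E^\ast}$ (the lattice norm is absolute, so $\norma{\,|x^\ast|\,}=\norma{x^\ast}\le 1$) and $|x^\ast|(|x|)\le\norma{x^\ast}\,\norma{\,|x|\,}=\norma{x}$; for the reverse inequality I would use that there is a positive functional $y^\ast\in B_{E^\ast}^+$ with $y^\ast(|x|)=\norma{\,|x|\,}=\norma{x}$ (again by Hahn--Banach applied to the positive element $|x|$, or directly by the absolute description of the norm), and since $y^\ast\ge 0$ we have $y^\ast=|y^\ast|$, whence $\sup_{x^\ast\in B_{E^\ast}}|x^\ast|(|x|)\ge |y^\ast|(|x|)=\norma{x}$.

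With this identity in hand the equivalences are short. For $(1)\Rightarrow(2)$: if $B$ is an absolutely James boundary then for each $x$ there is $x_0^\ast\in B$ with $|x_0^\ast|(|x|)=\sup_{x^\ast\in B_{E^\ast}}|x^\ast|(|x|)$, and by the identity just proved the right-hand side equals $\norma{x}$, giving exactly (2). The implication $(2)\Rightarrow(3)$ is the trivial specialization to $x\in S_E$, where $\norma{x}=1$. For $(3)\Rightarrow(1)$ I would argue as follows: given an arbitrary $x\in E$, if $x=0$ the condition is vacuous (any $x_0^\ast$ works since both sides are $0$), and if $x\neq 0$ apply (3) to the normalized vector $x/\norma{x}\in S_E$ to obtain $x_0^\ast\in B$ with $|x_0^\ast|(|x/\norma{x}|)=1$; homogeneity of $|x_0^\ast|$ and of the lattice operation $|\cdot|$ then yields $|x_0^\ast|(|x|)=\norma{x}=\sup_{x^\ast\in B_{E^\ast}}|x^\ast|(|x|)$, which is the defining property of an absolutely James boundary.

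I expect no serious obstacle here; the only step requiring a moment's care is the reverse inequality in the auxiliary identity, where one must produce a norming \emph{positive} functional for the positive element $|x|$ and observe that it coincides with its own modulus. This is where the absoluteness of the lattice norm (the equality $\norma{\,|x^\ast|\,}=\norma{x^\ast}$ and the existence of a positive norming functional at any positive vector) does the real work; everything else is homogeneity and the definition. I would state the auxiliary identity as a one-line remark inside the proof rather than as a separate lemma, since it is used only here.
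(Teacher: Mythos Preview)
Your proof is correct and follows essentially the same route as the paper: both hinge on the auxiliary identity $\sup_{x^\ast\in B_{E^\ast}}|x^\ast|(|x|)=\norma{x}$, which the paper obtains by invoking the canonical embedding and \cite[Proposition~1.3.5]{meyer} to rewrite the norm as a supremum over $B_{E^\ast}^+=|B_{E^\ast}|$, while you argue it directly via a positive norming functional. The cycle $(1)\Rightarrow(2)\Rightarrow(3)\Rightarrow(1)$ is then handled identically in both proofs.
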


\begin{proof}
    (1)$\Rightarrow$(2) Let $x \in E$. Since $B$ is an an absolutely James boundary of $E$, there exists $x_0^\ast \in B$ such that $|x_0^\ast|(|x|) = \sup_{x^\ast \in B_{E^\ast}} |x^\ast|(|x|)$. On the other hand, if $J: E \to E^{\ast \ast}$ denotes the canonical embedding, then $J(|x|): E^\ast \to \R$ is a positive linear functional, and by \cite[Proposition 1.3.5]{meyer} we get that 
    $$\norma{x} = \norma{|x|} = \norma{J(|x|)} = \sup_{x^\ast \in B_{E^\ast}^+} J(|x|)(x^\ast) = \sup_{x^\ast \in B_{E^\ast}^+} x^\ast(|x|).   $$
    Since $B_{E^\ast}^+ = |B_{E^\ast}|$, we conclude that $\displaystyle \norma{x} = \displaystyle \sup_{x^\ast \in B_{E^\ast}} |x^\ast|(|x|) = |x_0^\ast|(|x|)$. 

    (2)$\Rightarrow$(3) Immediate.

    (3)$\Rightarrow$(1) If $x = 0$, then for every $x_0^\ast \in B$, we get that $\displaystyle |x_0^\ast|(|x|) = 0 = \sup_{x^\ast \in B_{E^\ast}} |x^\ast|(|x|)$. Now, letting $x \in E$ with $x \neq  0$, there exists by the assumption $x_0^\ast \in B$ such that $ \displaystyle |x_0^\ast|\left (\frac{|x|}{\norma{x}} \right ) = 1$, which implies that $|x^\ast|(|x|) = \norma{x}$. Considering again the canonical embedding $J(|x|)$, we get from that $\displaystyle \norma{x} = \norma{J(|x|)} = \sup_{x^\ast \in B_{E^\ast}} |x^\ast|(|x|)$, and we are done.
\end{proof}

Our first example of an absolutely James boundary set is the set of order extreme points of $B_{E^\ast}$. First, we recall that a point $a$ of $A \subset E$ is said to be an order extreme point of $A$ if for all $x_0, x_1 \in A$ and $t \in (0,1)$ the inequality $a \leq t x_0 + (1-t) x_1$ implies that $x_0 = a = x_1$ (see \cite{oikhbergtursi}).

  \begin{example} \label{ex1}
        The set $\text{\rm OEP} (B_{E^\ast})$ of all order extreme points of $B_{E^\ast}$ is an absolutely James boundary of $E$. {\normalfont Indeed, for a given $x \in E$, we define $H = \displaystyle \conj{f \in E^\ast}{f(|x|) = \norma{x}}$.        
        We notice that $H\neq \emptyset$, by Hahn-Banach Theorem, and:
        
        $\bullet$ $H$ is weak* closed: If $(f_\alpha)_\alpha\subset H$ is a net such that $f_\alpha \cvfe f$ in $E^\ast$, then 
        $$ f(|x|) = \lim_\alpha f_\alpha(|x|) = \lim_{\alpha} \norma{x} = \norma{x}, $$
        which implies that $f \in H$.

        $\bullet$ $H$ is a supporting manifold of $B_{E^\ast}$ in the sense of  \cite[Definition 3.60]{fabian} by \cite[Lemma 3.61]{fabian}, because the linear functional $J_{E}(|x|): E^\ast \to \R$ attains its norm on $B_{E^\ast}$.

        Now, it follows from \cite[Proposition 3.64]{fabian} that $H$ contains an extreme point of $B_{E^\ast}$. If $f$ is such a point, then $f(|x|) = \norma{x}$, and consequently
        $$ \norma{x} = f(|x|) \leq |f(|x|)| \leq |f|(|x|) \leq \norma{x}. $$
        However, $|f|$ is an order extreme point of $B_{E^\ast}$ (see \cite[Theorem 19.2]{oikhbergtursi}).
        }
    \end{example}

Our next example of an absolutely James boundary set will be necessary in the proof of Lemma \ref{lemaaux}. Before we proceed, it is important to observe that if a positive operator $T: E \to F$ is norm-attaining, then $T$ attains its norm on a positive normalized vector. Indeed, if $T$ is norm-attaining, there exists $x \in S_E$ such that $\norma{T(x)} = \norma{T}$. The positivity of $T$ yields that
$$ \norma{T} = \norma{T(x)} \leq \norma{T(|x|)} \leq \norma{T} \cdot \norma{|x|} = \norma{T}, $$
which implies that $\norma{T(|x|)} = \norma{T}$.

\begin{example} \label{ex2}
    Let $E$ and $F$ be two Banach lattices with $F$ being Dedekind complete. If every positive operator $T: E \to F$ attains its norm, then the set
    $$B = \conj{x \otimes y^\ast}{x \in S_{E}^+, \, y^\ast \in S_{F^\ast}^+}$$
    is an absolutely James boundary of $\mathcal{L}^r(E; F)$. {\normalfont Indeed, for every $T \in S_{\mathcal{L}^r(E; F)}$, we have by the assumption that $|T|$ attains its norm, and hence there exists $x \in S_{E}^+$ such that $\norma{|T|(x)} = \norma{|T|} = \norma{T}_r = 1$. Since $|T|(x) \geq 0$, there exists $y^\ast \in S_{F^\ast}^+$ such that $y^\ast(|T|(x)) = 1$ (Apply \cite[Corollary 1.3]{lotz} to the sublattice $[|T|(x)]$ and the positive linear functional $g(\lambda \, |T|(x)) = 1$). Finally, taking $f = x \otimes y^\ast$, we get that $0 \leq f \in \left ( \mathcal{L}^r(E; F) \right )^\ast$, and hence 
    $|f|(|T|) = f(|T|) = y^\ast(|T|)(x) = 1$.}
\end{example}

In \cite{pfitzner}, H. Pfitzner showed that if $B$ is a James boundary of a Banach space $X$ and $K \subset X$ is a norm bounded $\sigma(X, B)$-compact set, then $K$ is weakly compact. In Theorem \ref{teosimons} we are going to prove an absolute version of this result considering sequentially compact sets in the absolute weak topology. Let us recall that for a nonempty subset $A$ of $E^\ast$, the absolute weak topology $|\sigma|(E,A)$ is the locally convex-solid topology on $E$ generated by the family $\{p_{x^{*}}: x^{*} \in A\}$ of lattice seminorms, where $p_{x^{*}}(x) = |x^{*}| (|x|)$ for all $x \in E$ and $x^{*} \in A$. The compact and the sequentially compact sets for this topology were studied by the authors and G. Botelho in \cite{botlumir}. It will be proved below that, for an absolutely James boundary $B$ of $E$, if $K \subset E$ is norm bounded and sequentially $|\sigma|(E, B)$-compact, then $K$ is absolutely weakly sequentially compact, that is $K$ is sequentially $|\sigma|(E, E^\ast)$-compact. First, we recall the Simons' Inequality \cite{oja} and \cite[Lemma 3.123]{fabian}:

\begin{lemma}[Simons' Inequality]\label{Simonsinequality}
    Let $A$ be an nonempty set and let $(x_n)_n$ be a uniformly bounded sequence of functions defined on $A$. Let $D$ be a subset of $A$ such that for every sequence $(\lambda_n)_n$, $\lambda_n\geq 0$, satisfying $\displaystyle\sum_{n=1}^\infty \lambda _n=1$, the vector $\displaystyle\sum_{n=1}^\infty \lambda _n x_n$ attains its supremum over $D$, that is, there exists $d\in D$ such that 
    \begin{equation}\label{condition1}
    \sup\left\{\displaystyle\sum_{n=1}^\infty \lambda_n x_n(y):~y\in A\right\}=\displaystyle\sum_{n=1}^\infty \lambda _n x_n(d).
    \end{equation}
    Then $$\displaystyle\sup_{d\in D}\{\displaystyle\limsup_n{x_n(d)}\}\geq \displaystyle\inf_{x \in  \text{\rm co}(\{x_n;~n\in\mathbb{N}\})}\{\displaystyle\sup_{y\in A} x(y)\}$$
\end{lemma}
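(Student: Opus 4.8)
The plan is to prove the equivalent statement that, writing $c := \inf_{x \in \text{co}(\{x_n : n \in \N\})} \sup_{y \in A} x(y)$ for the right-hand side, for every $\epsilon > 0$ there exists $d \in D$ with $\limsup_n x_n(d) \ge c - \epsilon$; taking the supremum over such $d$ then yields the claim. Two preliminary observations frame the argument. First, the uniform bound on $(x_n)_n$ guarantees that every convex series $\sum_n \lambda_n x_n$ is a well-defined bounded real function on $A$ and that $c$ is finite; in particular the hypothesis on $D$ applies to every such series. Second, since each convex combination of the $x_n$ belongs to $\text{co}(\{x_n : n \in \N\})$, its supremum over $A$ is automatically at least $c$. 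The whole difficulty is therefore to transfer this \emph{uniform lower bound} $c$, valid at the supremum over all of $A$, into a lower bound for the $\limsup$ along the sequence, evaluated at a single point of $D$.

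The mechanism is an adaptive construction with a forgetting parameter $\lambda \in (0,1)$. I would build inductively finite convex combinations $g_k$, each supported on indices strictly beyond the largest index $n_{k-1}$ used in the previous blocks (so that $\min(\operatorname{supp} g_k) \to \infty$), together with running convex combinations obtained by mixing the new block with the accumulated ones through the recursion $y_k := (1-\lambda) g_k + \lambda y_{k-1}$. At each stage $g_k$ is chosen, among convex combinations of the not-yet-used functions, so as to make $\sup_{y \in A} y_k$ as small as the data allow, up to an error $\epsilon_k$ from a prescribed summable sequence. Because $y_k$ is again a convex combination of the $x_n$, one has $\sup_A y_k \ge c$; the greedy choice controls how far above $c$ it can sit. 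Applying the hypothesis on $D$ to the relevant convex combination produces a point $d$ at which the supremum over $A$ is attained, and the geometric weighting in the recursion is arranged precisely so that the mass of the mixture lies on the late blocks. Evaluating the recursion at $d$ and using that each $g_k$ is supported on indices tending to infinity, so that $g_k(d) \le \sup_{n > n_{k-1}} x_n(d)$, lets one pass the lower bound $c$ down to $\limsup_n x_n(d)$, up to the accumulated errors and a remainder controlled by powers of $\lambda$.

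The step I expect to be the main obstacle is exactly this last reconciliation. A single convex combination fixed in advance does not suffice: the index beyond which $x_n(d)$ is close to $\limsup_n x_n(d)$ depends on the attaining point $d$, which in turn depends on the whole combination, so no non-adaptive choice of weights can force the mass onto arbitrarily late coordinates uniformly in $d$. The resolution is the interplay between the greedy minimization of the running supremum and the forgetting factor $\lambda$, which damps the contribution of any finite head and leaves the tail, hence the $\limsup$, to carry the lower bound $c$; making the bookkeeping of the errors $\epsilon_k$ and the powers of $\lambda$ precise, and then letting $\lambda$ and the $\epsilon_k$ tend to their limits, is the delicate part of the argument. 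This is Simons' original construction, and one could alternatively invoke it directly from \cite{oja} or \cite[Lemma 3.123]{fabian}.
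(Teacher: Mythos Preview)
The paper does not prove this lemma at all: it is stated as Simons' Inequality with references to \cite{oja} and \cite[Lemma 3.123]{fabian}, and is then used as a black box in the proof of Lemma~\ref{lemasimons}. Your proposal goes well beyond this, sketching (accurately) Simons' original greedy construction with geometric damping; since you yourself note at the end that one could simply invoke the cited references, your final remark is exactly what the paper does, and the preceding outline is superfluous for the purposes of this paper.
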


In the following, we will apply Simons' inequality for $A = |B_{E^\ast}|$, $D = |B|$, where $B$ is an absolutely James boundary of $E$ and the absolute value of a bounded sequence in $E$ acting as functionals on $A$.

\begin{lemma} \label{lemasimons}
    Let $B$ be an absolutely James boundary of a Banach lattice $E$. If $(x_n)_n$ is a bounded sequence in $E$ such that $|g|(|x_n|)\longrightarrow 0$ for all $g\in B$, then $|f|(|x_n|)\longrightarrow 0$ for all $f\in E^*$. 
\end{lemma}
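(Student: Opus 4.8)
The plan is to apply Simons' Inequality (Lemma \ref{Simonsinequality}) exactly as announced, with $A = |B_{E^\ast}| = B_{E^\ast}^+$, $D = |B|$, and the sequence of functions $\phi_n \colon A \to \R$ defined by $\phi_n(y^\ast) = y^\ast(|x_n|)$. Setting $M = \sup_n \norma{x_n} < \infty$, each $\phi_n$ is bounded by $M$ on $A$, so $(\phi_n)_n$ is uniformly bounded; and since $B \subset B_{E^\ast}$ we have $D = |B| \subset B_{E^\ast}^+ = A$, as required.

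First I would verify the attainment hypothesis of Simons' Inequality. Given scalars $\lambda_n \geq 0$ with $\sum_n \lambda_n = 1$, the series $z := \sum_n \lambda_n |x_n|$ converges in norm (because $\sum_n \lambda_n \norma{|x_n|} \leq M$) to a positive vector, and continuity of each $y^\ast$ gives $\sum_n \lambda_n \phi_n(y^\ast) = y^\ast(z)$. Arguing as in the proof of the preceding Proposition, \cite[Proposition 1.3.5]{meyer} yields $\sup_{y^\ast \in A} y^\ast(z) = \norma{z}$. Since $B$ is an absolutely James boundary and $z = |z|$, there is $g \in B$ with $|g|(|z|) = \sup_{x^\ast \in B_{E^\ast}} |x^\ast|(|z|) = \norma{z}$; hence $d := |g| \in D$ satisfies $\sum_n \lambda_n \phi_n(d) = |g|(z) = \norma{z} = \sup_{y^\ast \in A} \sum_n \lambda_n \phi_n(y^\ast)$, which is precisely condition \eqref{condition1}.

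Then I would read off the two sides of the inequality. On the left, every $d \in D$ has the form $d = |g|$ with $g \in B$, so $\phi_n(d) = |g|(|x_n|) \to 0$ by hypothesis, giving $\sup_{d \in D} \limsup_n \phi_n(d) = 0$. On the right, a finite convex combination $\phi = \sum_n \lambda_n \phi_n$ corresponds to $w = \sum_n \lambda_n |x_n| \geq 0$ with $\sup_{y^\ast \in A} \phi(y^\ast) = \norma{w}$. Thus Simons' Inequality forces $\inf \{\, \norma{\sum_n \lambda_n |x_n|} : \lambda_n \geq 0, \ \sum_n \lambda_n = 1 \,\} = 0$, the infimum running over finite convex combinations.

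Finally, to pass from this infimum statement to the termwise conclusion, I would argue by contradiction. If $|f|(|x_n|) \not\to 0$ for some $f \in E^\ast$, then, replacing $f$ by $|f| \geq 0$, there are $\varepsilon > 0$ and a subsequence $(x_{n_k})_k$ with $|f|(|x_{n_k}|) \geq \varepsilon$ for all $k$. The hypothesis $|g|(|x_n|) \to 0$ is inherited by the subsequence, so the three steps above applied to $(x_{n_k})_k$ produce a finite convex combination $w = \sum_k \mu_k |x_{n_k}|$ of arbitrarily small norm. But positivity forces $|f|(w) = \sum_k \mu_k |f|(|x_{n_k}|) \geq \varepsilon$, whence $\varepsilon \leq |f|(w) \leq \norma{f}\,\norma{w}$, i.e. $\norma{w} \geq \varepsilon / \norma{f}$ for every such $w$, contradicting that the infimum is $0$. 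I expect the main obstacle to be Step~2, namely checking that the supremum over $B_{E^\ast}^+$ equals the norm and is genuinely attained on $D$; the rest is bookkeeping, with the one caveat that one must not try to deduce termwise convergence directly from the infimum being $0$ (a convex combination could concentrate on early indices), which is exactly why the tail/subsequence device in the last step is needed.
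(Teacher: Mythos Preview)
Your proposal is correct and follows essentially the same route as the paper: Simons' Inequality with $A=B_{E^\ast}^+$, $D=|B|$, and the functions $y^\ast\mapsto y^\ast(|x_n|)$, the attainment hypothesis verified via the absolutely James boundary property and \cite[Proposition 1.3.5]{meyer}, and the contradiction obtained by passing to a subsequence along which $|f|(|x_{n_k}|)\geq\varepsilon$. The only cosmetic difference is order of presentation—the paper fixes the bad $f$ and subsequence first and then sets up Simons' Inequality directly for $(|z_k|)_k$, whereas you describe the machinery for $(x_n)_n$ and then note it must be rerun on the subsequence; your closing caveat shows you are aware this rerun is essential.
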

\begin{proof}
    For the sake of contradiction, we  assume that there exists $f \in B_{E^\ast}$ such that $|f|(|x_n|)\not\rightarrow 0$. Thus, there exists $\varepsilon>0$ and a subsequence $(z_k)_k$ of $(x_n)_n$ such that $|f|(|z_k|)>\varepsilon$ for every $k\in \mathbb{N}$. Let us prove that $D = |B|$ satisfies the condition (\ref{condition1}) on Simons' inequality for the sequence $(|z_k|)_k$ and $A = |B_E^\ast|$. Indeed, for every real sequence $(\lambda_k)_k, \, \lambda_k \geq 0$, satisfying $\sum_{\lambda = 1}^\infty \lambda_k = 1$, the vector $\sum_{k=1}^\infty \lambda_k |z_k|$ can be considered a positive linear functional on $E^\ast$, and by \cite[Proposition 1.3.5]{meyer}
    $$ \norma{\sum_{k=1}^\infty \lambda_k |z_k|} = \sup_{f \in B_{E^\ast}^+}  \sum_{k=1}^\infty \lambda_k|z_k| (f)  =  \sup_{f \in B_{E^\ast}}  \sum_{k=1}^\infty \lambda_k|z_k| (|f|).$$
    On the other hand, since $B$ is an absolutely James boundary of $E$, there exists $g \in B$ such that 
    $$ \sup_{f \in B_{E^\ast}}  \sum_{k=1}^\infty \lambda_k|z_k| (|f|) = \norma{\sum_{k=1}^\infty \lambda_k |z_k|} = |g| \left ( \sum_{k=1}^\infty \lambda_n |z_k| \right ) = \sum_{k=1}^\infty \lambda_k |z_k|(|g|). $$
    Now, it follows from the assumption and the Simons' inequality that
    $$ \inf_{x \in  \text{\rm co}(\{|z_k|\})}\{\displaystyle \sup_{f\in B_{E^\ast}} |f|(x)\} \leq  \sup_{g \in B} \, 
        \{\displaystyle\limsup_k{|g|(|z_k|)}\} = 
        \sup_{g \in B} \, 
        \{\displaystyle\lim_k{|g|(|z_k|)}\}
        = 0, $$
   and hence there exists $ x \in  \text{\rm co}(\{|z_k|\})$ such that $\displaystyle\sup_{f\in B_{E^*}} |f|(x)<\varepsilon$, which is a contradiction with
   $ \displaystyle |f|(|z_k|) > \varepsilon   $ for every $k \in \N$.
\end{proof}

Now, we prove the absolute version of Pfitzner's theorem.

\begin{theorem} \label{teosimons}
    Let $E$ be a Banach lattice and let $B$ be an absolutely boundary of $B_{E^\ast}$. If $K \subset E$ is norm-bounded and sequentially $|\sigma|(E, B)$-compact, then $K$ is absolutely weakly sequentially compact. Moreover, if $E$ is separable or $B_{E^{**}}$ is absolutely weak$^*$ compact, then $K$ is absolutely weakly compact.
\end{theorem}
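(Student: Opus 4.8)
The plan is to deduce the first assertion directly from Lemma \ref{lemasimons} applied to difference sequences, and then to obtain compactness from sequential compactness by combining the Eberlein--Šmulian theorem with the canonical embedding of $E$ into its bidual.

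For the first assertion, let $(x_n)_n$ be a sequence in $K$. Since $K$ is sequentially $|\sigma|(E,B)$-compact, there are a subsequence $(x_{n_k})_k$ and a point $x\in K$ with $x_{n_k}\to x$ in $|\sigma|(E,B)$, that is, $|g|(|x_{n_k}-x|)\to 0$ for every $g\in B$. Put $y_k:=x_{n_k}-x$. Then $(y_k)_k$ is norm bounded (as $K$ is) and $|g|(|y_k|)\to 0$ for all $g\in B$. As an absolutely boundary of $B_{E^\ast}$ is exactly an absolutely James boundary of $E$, Lemma \ref{lemasimons} yields $|f|(|y_k|)\to 0$ for every $f\in E^\ast$, i.e. $x_{n_k}\to x$ in $|\sigma|(E,E^\ast)$. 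Hence every sequence in $K$ has a subsequence $|\sigma|(E,E^\ast)$-converging to a point of $K$, so $K$ is absolutely weakly sequentially compact, with no extra hypothesis needed.

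For the moreover part I would first pass to weak compactness. From $|f(z)|\le |f|(|z|)$ we get that $|\sigma|(E,E^\ast)$-convergence implies weak convergence, so the first assertion shows that every sequence in $K$ has a weakly convergent subsequence with limit in $K$; by the Eberlein--Šmulian theorem $K$ is relatively weakly compact, and since every point of $\overline{K}^{\,\omega}$ is the weak limit of a sequence in $K$, which in turn has an $|\sigma|(E,E^\ast)$-convergent (hence weakly convergent) subsequence with limit in $K$, uniqueness of weak limits gives $\overline{K}^{\,\omega}=K$; thus $K$ is weakly compact. Let $J\colon E\to E^{\ast\ast}$ be the canonical embedding. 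Since $J$ is a lattice isometry, $|J(x)|=J(|x|)$, so the seminorms $\xi\mapsto|\xi|(|f|)$ generating $|\sigma|(E^{\ast\ast},E^\ast)$ satisfy $|J(x)|(|f|)=J(|x|)(|f|)=|f|(|x|)$ for all $x\in E$ and $f\in E^\ast$; hence $J$ is a homeomorphism from $(E,|\sigma|(E,E^\ast))$ onto $J(E)$ with the topology induced by $|\sigma|(E^{\ast\ast},E^\ast)$, and it suffices to prove that $J(K)$ is $|\sigma|(E^{\ast\ast},E^\ast)$-compact. If $B_{E^{\ast\ast}}$ is absolutely weak$^\ast$ compact, then, $K$ being weakly compact, $J(K)$ is weak$^\ast$ compact, hence weak$^\ast$ closed; as $|\langle\xi,f\rangle|\le|\xi|(|f|)$ shows that $\sigma(E^{\ast\ast},E^\ast)\subseteq|\sigma|(E^{\ast\ast},E^\ast)$, the set $J(K)$ is also $|\sigma|(E^{\ast\ast},E^\ast)$-closed, and being contained in the $|\sigma|(E^{\ast\ast},E^\ast)$-compact set $rB_{E^{\ast\ast}}$ (with $K\subseteq rB_E$) it is $|\sigma|(E^{\ast\ast},E^\ast)$-compact. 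If instead $E$ is separable, then the weakly compact set $K$ is weakly metrizable, because a countable weak$^\ast$-dense subset of $B_{E^\ast}$ separates the points of $E$; on $K$ the topology $|\sigma|(E,E^\ast)$ is finer than the weak topology, and by metrizability of $(K,\omega)$ it suffices to check that the identity $(K,\omega)\to(K,|\sigma|(E,E^\ast))$ is sequentially continuous: if $x_n\to x$ weakly in $K$, then by the first assertion every subsequence has a further subsequence $|\sigma|(E,E^\ast)$-converging to some point of $K$, necessarily equal to $x$, whence $x_n\to x$ in $|\sigma|(E,E^\ast)$. Thus this identity is a continuous bijection from the compact space $(K,\omega)$ onto the Hausdorff space $(K,|\sigma|(E,E^\ast))$, hence a homeomorphism, and $K$ is absolutely weakly compact.

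The genuinely delicate step is manufacturing compactness in the finer absolute weak topology out of mere sequential compactness, for the absolute weak topology is not angelic in general and the conclusion cannot be reached by a direct Eberlein--Šmulian argument. The two hypotheses are precisely what circumvents this: separability makes $(K,\omega)$ metrizable, after which the subsequence principle forces the identity to be continuous; while absolute weak$^\ast$ compactness of $B_{E^{\ast\ast}}$ provides an ambient absolutely weak$^\ast$ compact ball in the bidual in which the weak$^\ast$-closed (hence absolutely weak$^\ast$-closed) set $J(K)$ sits as a closed subset. I expect the main technical point to be the verification that $J$ is a lattice isometry intertwining $|\sigma|(E,E^\ast)$ and $|\sigma|(E^{\ast\ast},E^\ast)$, which is what legitimizes the reduction to the bidual.
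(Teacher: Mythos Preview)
Your proof of the first assertion is correct and matches the paper's argument exactly: extract a $|\sigma|(E,B)$-convergent subsequence and apply Lemma~\ref{lemasimons} to the difference sequence.

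For the ``moreover'' part your argument is correct but differs from the paper's. The paper disposes of this step in one line by invoking \cite[Theorem~2.9]{botlumir}, which states precisely that absolutely weakly sequentially compact sets are absolutely weakly compact under either of the two extra hypotheses. You instead give a self-contained proof: first obtain weak compactness of $K$ via Eberlein--\v{S}mulian, then in the separable case use metrizability of $(K,\omega)$ together with the sub-subsequence principle to upgrade the identity map to a homeomorphism onto $(K,|\sigma|(E,E^\ast))$, and in the other case embed into the bidual and exploit that $J(K)$ is weak$^\ast$-closed, hence $|\sigma|(E^{\ast\ast},E^\ast)$-closed inside the compact ball $rB_{E^{\ast\ast}}$. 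Your route is longer but has the advantage of being independent of the cited paper; the paper's route keeps the proof short since the authors had already established the needed equivalence elsewhere. Both are valid, and your identification of the delicate point---that absolute weak compactness does not follow from sequential compactness without an additional hypothesis---is exactly the content of the cited theorem.
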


\begin{proof}
If $(x_n)_n \subset K$, we have by the assumption that there exist a subsequence $(x_{n_k})_k$ of $(x_n)_n$ and $x \in E$ such that $|g|(|x_{n_k} - x|) \to 0$ for every $g \in B$. By Lemma \ref{lemasimons}, we get that $|f|(|x_{n_k} - x|) \to 0$ for every $f \in E^\ast$, proving that $K$ is is absolutely weakly sequentially compact. Moreover,  if $E$ is separable or $B_{E^{**}}$ is absolutely weak$^*$ compact, then $K$ is absolutely weakly compact by \cite[Theorem 2.9]{botlumir}.
\end{proof}

As announced in the Introduction, absolute versions of the strong operator topology and the weak operator topology will be necessary to proof our results. First, we recall that the strong operator topology ($SOT$, for short) and the weak operator topology ($WOT$, for short) in $\mathcal{L}(X; Y)$ are defined by the following basic neighborhoods, respectively:
$$ N(T; A,\varepsilon)=\conj{S \in \mathcal{L}(X; Y)}{\norma{T(x) - S(x)} < \varepsilon \, \text{for every } x \in A  } $$
and
$$N(T; A, B,\varepsilon)= \conj{S \in \mathcal{L}(X; Y)}{y^\ast(T(x) - S(x)) < \varepsilon \, \, \text{for every } y^\ast \in B, x \in A } $$
where $\varepsilon > 0$, $A \subset X, \, B \subset X^\ast$ are arbitrary finite sets and $T\in \mathcal{L}(X;Y)$.

Let $E$ and $F$ be Banach lattices with $F$ being Dedekind complete.
By the one hand, we define the \textbf{absolute strong operator topology} ($|SOT|$, for short) in $\mathcal{L}^r(E; F)$ by the following basic neighborhoods
$$ N(T; A, \varepsilon) = \conj{S \in \mathcal{L}^r(E; F)}{\norma{|T - S|(x)} < \varepsilon \, \text{for every } x \in A}, $$
where $A \subset E^+$ is an arbitrary finite set and $\varepsilon > 0$. 
Thus, a net $(T_\alpha)_\alpha \subset \mathcal{L}^r(E; F)$ converges to $T$ in the $|SOT|$ if and only if $(|T_\alpha - T|(x))_\alpha$ converges to $0$ for every $x \in E^+$. 
On the other hand, the \textbf{absolute weak operator topology} in $\mathcal{L}^r(E; F)$ is defined by the following basic neighborhoods
$$ N(T; A, B, \varepsilon) = \conj{S \in \mathcal{L}^r(E; F)}{y^\ast(|T - S|(x)) < \varepsilon \, \text{for every } y^\ast \in B, x \in A}, $$
where $A \subset E^+$ and $B \subset (F^\ast)^+$ are arbitrary finite sets and $\varepsilon > 0$. In particular, a net $(T_\alpha)_\alpha \subset \mathcal{L}^r(E; F)$ converges to $T$ in the absolute weak operator topology ($|WOT|$, in short) if and only if $(y^\ast(|T_\alpha - T|(x)))_\alpha$ converges to $0$ for every $x \in E^+$ and $y^\ast \in (F^\ast)^+$. 

\begin{remarks} {\rm
     (1) It is important to notice that the $|SOT|$ coincides with the absolute weak* topology on $E^\ast$ whenever $F = \R$, while the $|WOT|$ coincides with the absolute weak topology in $F$ whenever $E = \R$. Thus, the $|SOT|$ and $|WOT|$ are generalizations of the well known absolute weak* topology and the absolute weak topology, respectively. 

     \smallskip

    \noindent (2) The topology $|SOT|$ is stronger than topology $SOT$. Indeed, if $(T_\alpha)_\alpha \subset \mathcal{L}^r(E; F)$ is a net converging to $T$ in the $|SOT|$, we have that $(T_\alpha)_\alpha$ converges to $T$ in the $SOT$, because $|T_\alpha(x) - T(x)| \leq |T_\alpha - T|(x)$ holds for every $x \in E^+$ and every $\alpha$. To see that this two topologies are different in general, recall that for $F = \R$, the $|SOT|$ coincides with the absolute weak* topology and the $SOT$ coincides with the weak* topology in $E^\ast$.

    \smallskip

    \noindent (3) By adapting the argument from the item above, we have that the $|WOT|$ is stronger than $WOT$. These topologies are not the same in general, because the $|WOT|$ coincides with the absolute weak topology and the $SOT$ coincides with the weak topology in $F  = \mathcal{L}^r(\R; F)$.
    
    \smallskip 
    \noindent (4) It is easy to see that the $|SOT|$ is stronger than $|WOT|$. Moreover, these topologies do not coincide because the absolute weak* topology and the absolute weak topology are not the same in general.

    }
\end{remarks}

The following lemma will be necessary in the proof of Theorem \ref{teoprinc}.


\begin{lemma} \label{lemaaux}
Let $E$ and $F$ be Banach lattices with $F$ being Dedekind complete. If there exists a norm-closed convex set $C \subset \mathcal{L}^r(E; F)$ which is not sequentially $|SOT|$-closed, then there exists a positive non-norm attaining operator $T: E \to F$. 
\end{lemma}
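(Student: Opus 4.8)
The plan is to establish the contrapositive statement: assuming that every positive operator from $E$ to $F$ attains its norm, I would show that every norm-closed convex set $C \subset \mathcal{L}^r(E; F)$ is sequentially $|SOT|$-closed. The engine of the argument is Example~\ref{ex2}: under this standing assumption, the set $B = \conj{x \otimes y^\ast}{x \in S_E^+, \, y^\ast \in S_{F^\ast}^+}$ is an absolutely James boundary of $\mathcal{L}^r(E; F)$, which makes Lemma~\ref{lemasimons} available with $\mathcal{L}^r(E;F)$ in the role of the ambient Banach lattice. Recall that $\mathcal{L}^r(E;F)$ is indeed a Banach lattice precisely because $F$ is Dedekind complete.

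Next I would take a sequence $(T_n)_n \subset C$ converging to some $T \in \mathcal{L}^r(E;F)$ in the $|SOT|$ and aim to conclude $T \in C$. For $g = x \otimes y^\ast \in B$, which is a positive functional (so $|g| = g$) acting by $g(S) = y^\ast(S(x))$, the $|SOT|$-convergence gives $\norma{|T_n - T|(x)} \to 0$, and hence
$$ |g|(|T_n - T|) = y^\ast\big(|T_n - T|(x)\big) \to 0 \quad \text{for every } g \in B, $$
so $(T_n - T)_n$ tends to $0$ along the boundary $B$.

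The main obstacle is verifying the boundedness hypothesis of Lemma~\ref{lemasimons}, namely that $(T_n - T)_n$ is bounded in the regular norm. I would argue as follows: $|SOT|$-convergence gives $\norma{|T_n - T|(x)} \to 0$ for every $x \in E^+$, and since each $|T_n - T|$ is positive, the estimate $\big||T_n - T|(x)\big| \leq |T_n - T|(|x|)$ upgrades this to $\norma{|T_n - T|(x)} \to 0$ for all $x \in E$. Thus the moduli $|T_n - T|$ converge to $0$ in the strong operator topology and are in particular uniformly bounded in operator norm by the uniform boundedness principle. Since $\norma{T_n - T}_r = \norma{|T_n - T|}$ when $F$ is Dedekind complete, this yields $\sup_n \norma{T_n - T}_r < \infty$.

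With boundedness secured, Lemma~\ref{lemasimons} applies to $(T_n - T)_n$ and gives $|f|(|T_n - T|) \to 0$ for every $f \in (\mathcal{L}^r(E;F))^\ast$. Using $|f(T_n - T)| \leq |f|(|T_n - T|)$, this forces $T_n \to T$ weakly in $\mathcal{L}^r(E;F)$. As $C$ is convex and norm-closed, it is weakly closed by Mazur's theorem, so $T \in C$. Hence $C$ is sequentially $|SOT|$-closed, which is exactly the contrapositive of the claim and completes the proof.
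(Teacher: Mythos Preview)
Your proof is correct and follows essentially the same strategy as the paper: use Example~\ref{ex2} to get the absolutely James boundary $B$, verify boundedness of $(T_n-T)_n$ via Banach--Steinhaus on the moduli, apply Lemma~\ref{lemasimons} to upgrade $|\sigma|(\cdot,B)$-convergence to absolute weak convergence, and finish with the coincidence of norm and weak closures for convex sets. Your presentation is in fact a bit more direct than the paper's, which routes the last step through Theorem~\ref{teosimons} and the equality $\overline{\text{co}}^{|\sigma|}(K)=\overline{\text{co}}(K)$ from \cite{botlumir}, whereas you invoke Lemma~\ref{lemasimons} and Mazur's theorem straightaway.
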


\begin{proof} 
   Since $C$ is not sequentially $|SOT|$-closed, there exists a sequence of operators $(T_n)_n \subset C$ such that $(T_n)_n$ converges to an operator $T \notin C$ in the $|SOT|$, and hence in the $|WOT|$. Setting $K = \conj{T_n - T}{n \in \N}$, we get that 
    
    \smallskip
    \noindent (i) $K \cup \{0\}$ is  sequentially $|WOT|$-compact. \\
    (ii) 
    If $0 \in \overline{\text{co}}(K)$, there exists $(S_n)_n \subset \text{co}(K)$ such that $\norma{S_n}_r \to 0$. For each $n \in \N$, there exists $a_1, \dots, a_{k_n} \geq 0$ such that $\sum_{i=1}^{k_n} a_i = 1$ and $S_n = \sum_{i=1}^{k_n} a_i (T_i - T) = \sum_{i=1}^{k_n} a_i T_i - a_iT$. Since $C$ is convex, $\sum_{i=1}^{k_n} a_i T_i \in C$ for every $n \in \N$. So, letting $R_n = \sum_{i=1}^{k_n} a_i T_i$, we have that 
    $$ \lim_{n \to \infty} \norma{R_n - T}_r = \lim_{n \to \infty} \norma{S_n}_r = 0. $$
    As $C$ is assumed to be norm-closed, we get that $T \in C$, a contradiction. Thus, $0 \notin \overline{\text{co}}(K)$
    

    \noindent (iii) $K$ is a norm-bounded subset of $\mathcal{L}^r(E; F)$. Indeed,
    since $(T_n)_n$ converges to $T$ in the $|SOT|$, $(|T_n - T|(x))_n$ is a bounded sequence in $F$ for every $x \in E$, and so by the  Banach-Steinhauss theorem, there exists $C > 0$ such that 
    $$ \norma{T_n - T}_r = \norma{|T_n - T|} \leq C  \quad \text{for every $n \in \N$}.$$

    \smallskip

    Suppose that every positive operator $T\colon E\longrightarrow F$ attains its norm. Defining $B = \conj{x \otimes y^\ast}{x \in S_{E}^+, \, y^\ast \in S_{F^\ast}^+}$, we have by Example \ref{ex2} that $B$ is an absolutely James boundary of $\mathcal{L}^r(E; F)$, and hence $K$ is sequentially $|\sigma|(E, B)$-compact. Indeed, since $K \cup \{0\}$ is sequentially $|WOT|$-compact, for every sequence $(S_n)_n \subset K$, there exists a subsequence $(S_{n_k})_k$ converging to $S \in K \cup \{0\}$, that is $y^\ast(|S_{n_k} - S|)(x) \to 0$ for every $x \in E^+$ and $y \in (F^\ast)^+$. In particular, $|f|(|S_{n_k} - S|) \to 0$ for every $f \in B$, proving that $K \cup \{0\}$ is sequentially $|\sigma|(E, B)$-compact.
    Now, by Theorem \ref{teosimons}, we obtain that $K \cup \{0\}$ is an absolutely weakly sequentially compact set, which implies that $K \cup \{0\}$ is sequentially $|\sigma|(E, E^\ast)$-closed.
    Since the sequential  closure is contained in the closure, we get that $0 \in \overline{K}^{|\sigma|(E, E^\ast)}$, which yields that $0 \in \overline{co}^{|\sigma|(E, E^\ast)}(K) = \overline{co}(K)$ - recall from \cite[p. 10]{botlumir} that the norm closure and the weak absolute closure coincide for convex sets, - a contradiction.
\end{proof}


The following is an adaptation of the argument presented in \cite[p. 2464]{bupams}, and present a  approximation result of the identity by positive compact operators.

\begin{lemma} \label{lemabu}
 Let $G$ be a Banach lattice with order continuous norm whose order is defined by a basis. For every compact subset $K \subset G$ and $\varepsilon > 0$ there exists a positive compact operator $S: G \to G$ such that $S \leq id_G$, $\norma{|S - id_G|(x)} < \varepsilon$ for every $x \in K$ and $\norma{S}_r \leq 1$.
\end{lemma}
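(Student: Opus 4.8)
The plan is to take $S$ to be a high-order canonical projection associated to the basis that defines the order. Write $(e_n)_n$ for this basis and $(e_n^\ast)_n$ for the coordinate functionals, and for each $n \in \N$ put $P_n = \sum_{i=1}^{n} e_i^\ast \otimes e_i$. Since the order of $G$ is the coordinatewise one, each basis vector satisfies $e_i \geq 0$ and each coordinate functional satisfies $e_i^\ast \geq 0$, so every $P_n$ is a positive operator of finite rank, hence compact. Moreover, for $x \in G^+$ we have $(id_G - P_n)(x) = \sum_{i>n} e_i^\ast(x)\, e_i \geq 0$, so that $0 \leq P_n \leq id_G$ in $\mathcal{L}^r(G; G)$. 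I would first observe that this single order estimate already delivers two of the three requirements: from the positivity of $P_n$ and $P_n \leq id_G$ we get $|P_n(x)| \leq P_n(|x|) \leq |x|$ for every $x \in G$, whence $\norma{P_n} \leq 1$ by monotonicity of the norm, and since $P_n \geq 0$ this gives $\norma{P_n}_r = \norma{P_n} \leq 1$. In addition, because $id_G - P_n$ is itself positive, the operator modulus is simply $|P_n - id_G| = id_G - P_n$, so no appeal to Dedekind completeness of $G$ is needed to interpret the statement.

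It then remains to choose $n$ large enough that $\norma{(id_G - P_n)(x)} < \varepsilon$ for all $x \in K$. For a fixed $x \in G^+$ the sequence $((id_G - P_n)(x))_n$ is decreasing in $n$; moreover any positive lower bound $y$ of this sequence has all its coordinates equal to $0$ (since the $j$-th coordinate of $(id_G - P_n)(x)$ vanishes once $n \geq j$), so the sequence decreases to $0$ in order. Here the order continuity of the norm yields $\norma{(id_G - P_n)(x)} \to 0$, and the case of a general $x$ follows from $|(id_G - P_n)(x)| \leq (id_G - P_n)(|x|)$. Thus $(P_n)_n$ converges to $id_G$ pointwise in norm. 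To turn this into uniform convergence on the compact set $K$, I would use $\sup_n \norma{P_n} \leq 1$ together with a standard covering argument: cover $K$ by finitely many balls of radius $\varepsilon/4$ with centres $x_1, \dots, x_m$, choose $N$ so that $\norma{(id_G - P_N)(x_j)} < \varepsilon/2$ for every $j$, and estimate $\norma{(id_G - P_N)(x)}$ for an arbitrary $x \in K$ by passing through the nearest centre, using $\norma{id_G - P_N} \leq 2$. Setting $S := P_N$ then produces a positive compact operator with $S \leq id_G$, $\norma{S}_r \leq 1$ and $\norma{|S - id_G|(x)} = \norma{(id_G - P_N)(x)} < \varepsilon$ for all $x \in K$, as required.

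The construction is essentially forced, and most of the verifications are routine; the one genuinely load-bearing point is the passage from the order convergence $(id_G - P_n)(x) \downarrow 0$ to norm convergence, which is exactly where the order continuity of the norm is used and which would fail in its absence. The remaining care goes into promoting this pointwise convergence to a uniform statement over the compact set $K$, and into checking at the outset that the coordinatewise order really does give $0 \leq P_n \leq id_G$ — the estimate that hands us both the contractivity $\norma{S}_r \leq 1$ and the domination $S \leq id_G$ at once.
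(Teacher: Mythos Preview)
Your argument is correct and is precisely the construction the paper has in mind: the operator $S$ produced in \cite[p.~2464]{bupams} is exactly a basis projection $P_N$, and the paper's proof is nothing more than a pointer to that source together with the remark that the resulting operator is positive. Your write-up in fact supplies more detail than either reference; the one small remark is that the convergence $\norma{(id_G-P_n)(x)}\to 0$ already follows from $(e_n)$ being a Schauder basis, so order continuity is not strictly needed at that step (though your order-theoretic argument is of course valid).
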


\begin{proof}
     Let $K \subset G$ be a compact subset and $\varepsilon > 0$. By the assumption, we may see $G$ as a sublattice of $\R^{\N}$. Thus,
    proceeding as in \cite[p. 2464]{bupams}, there exists a finite rank (hence compact) operator $S: G \to G$ such that $S \leq id_G$, $\norma{|S - id_G|(x)} < \varepsilon$ for every $x \in K$ and $\norma{S}_r \leq 1$. Moreover, it is not difficult to see that this operator is positive considering its construction.  
\end{proof}

The following lemma is necessary in order to prove our main result.

\begin{lemma} \label{lemaseq}
    If $G$ is a Banach lattice with order continuous norm whose order is defined by a basis, then there exists a sequence $(S_n)_n \subset B_{\mathcal{K}^+(G; G)}$ such that $\displaystyle \lim_{n \to \infty} \norma{|S_n - id_{G}|(x)} = 0$ for every $x \in G$. 
\end{lemma}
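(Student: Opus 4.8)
The plan is to produce the single sequence $(S_n)_n$ by applying Lemma \ref{lemabu} along an increasing family of finite sets that exhausts a countable dense subset of $G$, and then to promote the resulting pointwise convergence on that dense set to pointwise convergence on all of $G$ by means of a uniform bound on the regular norms. The entire argument is a diagonal/exhaustion scheme resting on the separability of $G$ and on the quantitative control $\norma{S_n}_r \le 1$ supplied by Lemma \ref{lemabu}.

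First I would record that $G$ is separable: since its order is given by a basis, $G$ carries a Schauder basis, and the finite rational linear combinations of the basis vectors form a countable dense set $\{x_1, x_2, \dots\}$. For each $n \in \N$ I would set $K_n = \{x_1, \dots, x_n\}$, a finite and hence compact set, and apply Lemma \ref{lemabu} with $\varepsilon = 1/n$ to obtain a positive compact operator $S_n \colon G \to G$ with $S_n \le id_G$, $\norma{|S_n - id_G|(x_i)} < 1/n$ for all $i \le n$, and $\norma{S_n}_r \le 1$; in particular $S_n \in B_{\mathcal{K}^+(G; G)}$ for every $n$. Convergence on the dense set is then immediate: for fixed $i$ and every $n \ge i$ one has $\norma{|S_n - id_G|(x_i)} < 1/n \to 0$. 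To reach an arbitrary $x \in G$, the key observation is the uniform estimate $\norma{S_n - id_G}_r \le \norma{S_n}_r + \norma{id_G}_r \le 2$; since the operator norm of a modulus equals the regular norm, this gives $\norma{|S_n - id_G|(z)} \le 2\norma{z}$ for all $z \in G$ and all $n$. Given $x \in G$ and $\delta > 0$, I would choose $x_i$ with $\norma{x - x_i} < \delta$ and use the linearity of the (positive) operator $|S_n - id_G|$ to write $|S_n - id_G|(x) = |S_n - id_G|(x - x_i) + |S_n - id_G|(x_i)$, whence $\norma{|S_n - id_G|(x)} \le 2\delta + \norma{|S_n - id_G|(x_i)}$ for $n \ge i$; letting $n \to \infty$ and then $\delta \to 0$ yields $\norma{|S_n - id_G|(x)} \to 0$.

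The construction and the density transfer are essentially routine, so the point I would treat most carefully — and the only place where the hypotheses must be used in full — is that the modulus $|S_n - id_G|$ exists, is a genuine positive linear operator, and satisfies $\norma{|T|} = \norma{T}_r$. This is exactly what makes both the statement of Lemma \ref{lemabu} and the uniform-bound step above meaningful, and it is guaranteed here because an order continuous Banach lattice is Dedekind complete, so that $\mathcal{L}^r(G; G)$ is itself a Banach lattice in which moduli exist. Without this structural fact the transfer from the dense set to all of $G$ would not be available, which is why I regard the interplay between order continuity (hence Dedekind completeness) and the regular-norm bound $\norma{S_n}_r \le 1$ as the crux of the argument rather than any individual estimate.
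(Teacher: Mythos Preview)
Your proof is correct and follows essentially the same route as the paper: apply Lemma~\ref{lemabu} to an increasing exhaustion by finite sets to manufacture the $S_n$'s, and then pass from those finite sets to all of $G$ using the uniform bound $\norma{S_n-id_G}_r\le 2$. The only cosmetic difference is that the paper takes the finite sets to be initial segments of the basis itself and, for a general $x=\sum_i a_i x_i$, splits into a partial sum plus a tail, whereas you use a countable dense set and a standard $2\delta$ density transfer; both rely on the same uniform regular-norm estimate and on Dedekind completeness of $G$ to make the modulus available.
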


\begin{proof}
    By the assumption, there exists a basis $(x_n)_n$ of $G$ whose order structure is given coordinate-wise. Fix $n \in \N$. Applying Lemma \ref{lemabu} to the compact $C_n = \{x_1, \dots, x_n\}$ and $\varepsilon = 1/2^n$, there exists $S_n \in B_{\mathcal{K}^+(G; G)}$ such that 
 $$\norma{|S_n - id_G|(x_i)} < 1/2^{n} \quad \text{for every $i=1, \dots, n$}.$$ 
 Now, fix $x = \displaystyle\sum_{i=1}^\infty a_i x_i$ and let $\delta > 0$. By the one hand, there exists $n_0 \in \N$ such that $\norma{\displaystyle\sum_{i=n_0 + 1}^\infty a_i x_i} < \displaystyle \frac{\delta}{4}$. On the other hand, there exists $N > n_0$ such that
 $\displaystyle \frac{1}{2^N} < \frac{\delta}{2 \sum_{i=1}^{n_0} |a_i|}$. Thus
\begin{align*}
    \norma{|S_N - id_G|(x)} & \leq \sum_{i=1}^{n_0} |a_i| \norma{|S_N - id_G|(x_i)} + \norma{|S_N - id_G|} \norma{\sum_{i=n_0 + 1}^\infty a_i x_i} \\
    & < \sum_{i=1}^{n_0} |a_i| \frac{1}{2^N} + 2 \frac{\delta}{4} < \delta,
\end{align*}
proving that $\displaystyle \lim_{n \to \infty} \norma{|S_n - id_G|(x)} = 0$ for every $x \in G$. 
\end{proof}

\begin{theorem} \label{teoprinc}
     Let $E$ be a reflexive Banach lattice and let $F$ be a Dedekind complete Banach lattice. Consider the following conditions: \\
{\rm (1)} Every positive operator $T: E \to F$ is compact. \\
{\rm (2)} Every positive operator $T: E \to F$ attains its norm. \\
{\rm (3)} $B_{\mathcal{K}^+(E; F)}$ is sequentially closed in the absolutely strong operator topology. \\
Then {\rm (1)}$\Rightarrow${\rm(2)}$\Rightarrow${\rm(3)}. In addition, if the order of $E$ is given by a basis or $F$ has order continuous norm whose order is defined by a basis, then {\rm(3)}$\Rightarrow${\rm(1)}.
\end{theorem}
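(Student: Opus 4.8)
The plan is to run the chain (1)$\Rightarrow$(2)$\Rightarrow$(3) and then close it with (3)$\Rightarrow$(1) under the additional basis hypothesis. For (1)$\Rightarrow$(2), I would use only reflexivity and compactness (positivity is not even needed here): given a positive operator $T$, choose $(x_n)_n \subset B_E$ with $\norma{T(x_n)} \to \norma{T}$; since $E$ is reflexive, $B_E$ is weakly compact, so after passing to a subsequence $x_{n_k} \cvf x$ with $x \in B_E$; because a compact operator sends weakly convergent sequences to norm-convergent ones, $T(x_{n_k}) \to T(x)$ in norm, whence $\norma{T(x)} = \norma{T}$ and $T$ attains its norm.

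For (2)$\Rightarrow$(3) I would simply feed $C = B_{\mathcal{K}^+(E;F)}$ into Lemma \ref{lemaaux}. This set is convex (a convex combination of positive compact operators of regular norm at most $1$ is again of that form) and norm-closed in $\mathcal{L}^r(E;F)$: the regular norm dominates the operator norm, so $\norma{\cdot}_r$-limits are operator-norm limits and hence preserve compactness, while the positive cone and the ball are plainly closed. Reading Lemma \ref{lemaaux} in contrapositive form, the hypothesis (2) that every positive operator attains its norm forces every norm-closed convex set, and in particular $B_{\mathcal{K}^+(E;F)}$, to be sequentially $|SOT|$-closed, which is exactly (3).

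The core of the theorem is (3)$\Rightarrow$(1). Given a positive $T\colon E\to F$ with $T\neq 0$, the goal is to realize $T/\norma{T}_r$ as a sequential $|SOT|$-limit of elements of $B_{\mathcal{K}^+(E;F)}$ and invoke (3). The approximants come from Lemma \ref{lemaseq}. If the order of $E$ is given by a basis, then (as $E$ is reflexive, hence has order continuous norm) Lemma \ref{lemaseq} applied to $G=E$ yields $(S_n)_n \subset B_{\mathcal{K}^+(E;E)}$ with $\norma{|S_n - id_E|(x)} \to 0$ for all $x$; I would set $R_n = TS_n/\norma{T}_r$, which lies in $B_{\mathcal{K}^+(E;F)}$ since $TS_n$ is positive, compact, and $\norma{TS_n}_r \le \norma{T}_r\norma{S_n}_r \le \norma{T}_r$. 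Using the modulus estimate $|T(S_n - id_E)| \le T\circ |S_n - id_E|$ (valid because $T\ge 0$), one gets $\norma{|TS_n - T|(x)} \le \norma{T}\,\norma{|S_n - id_E|(x)} \to 0$ for every $x \in E^+$, so $R_n \to T/\norma{T}_r$ in $|SOT|$, and sequential $|SOT|$-closedness of $B_{\mathcal{K}^+(E;F)}$ forces $T/\norma{T}_r$, hence $T$, to be compact. If instead $F$ has order continuous norm with order given by a basis, I would run the symmetric argument with $(S_n)_n \subset B_{\mathcal{K}^+(F;F)}$ from Lemma \ref{lemaseq} applied to $G=F$, take $R_n = S_nT/\norma{T}_r$, and use the companion estimate $|(S_n - id_F)T|(x) \le |S_n - id_F|(T(x))$ to conclude $\norma{|S_nT - T|(x)} \le \norma{|S_n - id_F|(T(x))} \to 0$.

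I expect the main obstacle to be the two composition/modulus inequalities and the bookkeeping of which factor is being perturbed on the left versus the right in the two cases, together with confirming that the approximants genuinely lie in $B_{\mathcal{K}^+(E;F)}$ with the right regular-norm control; once $R_n \to T/\norma{T}_r$ in $|SOT|$ is secured, (3) closes the argument immediately. A secondary point worth checking is that $|SOT|$ is Hausdorff, so that the limit is unambiguously $T/\norma{T}_r$; this follows from the implication $|T - S| = 0 \Rightarrow T = S$.
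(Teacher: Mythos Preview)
Your proposal is correct and follows essentially the same route as the paper: (1)$\Rightarrow$(2) via reflexivity plus compactness, (2)$\Rightarrow$(3) by applying Lemma~\ref{lemaaux} to $C=B_{\mathcal{K}^+(E;F)}$, and (3)$\Rightarrow$(1) by composing $T$ with the approximate identities from Lemma~\ref{lemaseq} on the $E$-side or the $F$-side and using the modulus inequalities $|T\circ(S_n-id_E)|\le T\circ|S_n-id_E|$ and $|(S_n-id_F)\circ T|\le |S_n-id_F|\circ T$. Your presentation is in fact slightly more careful than the paper's (explicit normalization by $\norma{T}_r$, verification that $B_{\mathcal{K}^+(E;F)}$ is convex and $\norma{\cdot}_r$-closed, and the Hausdorffness remark), but there is no substantive difference.
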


\begin{proof}
    (1)$\Rightarrow$(2) Let $T: E \to F$ be a positive operator and let $(x_n)_n \subset B_E$ such that $\displaystyle \lim_{n \to \infty} \norma{T(x_n)} = \norma{T}$. 
    Since $T$ is compact and $E$ is reflexive, there exists a subsequence $(x_{n_k})_k$ such that $(T(x_{n_k}))_k$ converges to $T(x)$ in $F$ for some $x \in E$. Thus
    $$ \norma{T(x)} = \norma{T(\lim_{k \to \infty} x_{n_k})} = \lim_{k \to \infty} \norma{T(x_{n_k})} = \norma{T}, $$
    proving that $T$ attains its norm.

    (2)$\Rightarrow$(3) This implication follows from Lemma \ref{lemaaux}.

    We now prove that (3)$\Rightarrow$(1). First, we assume that the order of $E$ is given by a basis. Thus, it follows from Lemma \ref{lemaseq} that there exists a sequence $(S_n)_n \subset B_{\mathcal{K}^+(E; E)}$ such that $\displaystyle \lim_{n \to \infty} \norma{|S_n - id_{E}|(x)} = 0$ for every $x \in E$. Letting $T: E \to F$ be a positive operator and defining $T_n = T \circ S_n, n \in \N$, we get that $(T_n)_n$ is a sequence of positive compact operators with $\norma{T_n} \leq \norma{T}$ for every $n \in \N$. Moreover, since
    $$ |(T_n - T)(x)| = |(T \circ S_n - T \circ id_E)(x)| = |T((S_n - id_E)(x))| \leq T(|S_n - id_E|(x)) $$
    for every $n \in \N$ and $x \in E^+$, we get that
    $|T_n- T|(x) \leq T(|S_n - id_E|(x))$ for every $x \in E^+$, which implies that
    $$ \norma{|T_n - T|(x)} \leq \norma{T(|S_n - id_E|(x))} \leq \norma{T} \norma{|S_n - id_E|(x)} \to 0 $$
 when $n \to \infty$ for all $x \in E^+$, proving that $(T_n)_n$ converges to $T$ in the $|SOT|$. Therefore, as we assumed that $B_{\mathcal{K}^+(E; F)}$ is sequentially closed in the absolutely strong operator topology, we conclude that $T$ is compact.

 Assuming that $F$ has order continuous norm and its order is defined by a basis, there exists, by Lemma \ref{lemaseq}, a sequence $(S_n)_n \subset B_{\mathcal{K}^+(F; F)}$ such that $\displaystyle \lim_{n \to \infty} \norma{|S_n - id_{F}|(y)} = 0$ for every $y \in F$. For a positive operator $T: E \to F$, define $T_n = S_n \circ T$ and proceed as in the last case to obtain that $(T_n)_n$ converges to $T$ in the $|SOT|$. Finally, the assumption yields $T$ is compact.
\end{proof}

The next remark highlights a striking difference between classical norm-attaining operators and norm-attaining positive operators.

\begin{remarks} \label{obs}
{\rm (1)} It is important to observe that Theorem \ref{teoprinc} can be applied to $E = \ell_p$ and $F = L_q(\mu)$ whenever $1 \leq q < p\leq 2$, because every positive linear operator $T: E \to F$ is compact (\cite[Theorem 4.9]{chenwick}). Note, however, that there exist non-compact linear operators from $\ell_p$ into $L_q(\mu)$ by \cite[Theorem 4.7]{chenwick}. 
Thus, it follows from \cite[Theorem B]{sheldon} that there exists a non-norm attaining operator from $\ell_p$ into $L_q(\mu)$, while every positive operator from $\ell_p$ into $L_q[0,1]$  attains its norm.\\
{\rm (2)} Let $E = L_p(\mu)$ and $F = \ell_q$ with $2 \leq q < p < \infty$. By \cite[Theorem 4.7]{chenwick}, there exists a non-compact bounded linear operator from $L_p(\mu)$ into $\ell_q$, and so by \cite[Theorem B]{sheldon} there exists a non-norm attaining operator from $L_p(\mu)$ into $\ell_q$. Nevertheless, every positive operator from $L_p(\mu)$ into $\ell_q$ is norm attaining from Theorem \ref{teoprinc} and \cite[Theorem 4.9]{chenwick}. \\
{\rm (3)} If $E$ is an $AM$-space with an order unit $e$, i.e. $B_E = [-e,e]$, then every positive linear operator from $E$ into any Banach lattice $F$ attains its norm in $e$ (see \cite[Exercise 2, p. 270]{alip}). However, since $E$ is not reflexive, there exists a (non-positive) continuous linear functional $f \in E^*$ that does not attain its norm. Thus, for every Banach lattice $F$ and every $0 \neq y \in F$, the non-positive linear operator $T(x) = f(x) y$ does not attain its norm.
\end{remarks}

Our next result is an application of Theorem \ref{teoprinc} and \cite[Theorem 4.3]{botmirru} for $n=1$.

\begin{corollary}
    Let $E$ be a reflexive Banach lattice and let $F$ be an infinite dimensional Banach lattice with order continuous norm whose order is defined by a basis. The following are equivalent: \\
    {\rm (1)} $\mathcal{L}^r(E; F)$ contains no copy of $c_0$. \\
    {\rm (2)} Every positive linear operator from $E$ to $F$ is compact. \\
    {\rm (3)} Every positive linear operator from $E$ to $F$ attains its norm. 
\end{corollary}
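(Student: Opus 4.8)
The plan is to obtain the three equivalences by combining Theorem \ref{teoprinc} with \cite[Theorem 4.3]{botmirru} (taken with $n=1$), so that the only real work is checking that the hypotheses of both results are met and bridging between the language of positive operators and that of regular operators. The structure I would aim for is (1)$\Leftrightarrow$(2) from the $c_0$-characterization and (2)$\Leftrightarrow$(3) from Theorem \ref{teoprinc}.

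First I would record that every Banach lattice with order continuous norm is Dedekind complete (see \cite{meyer}), so that $E$ being reflexive and $F$ being order continuous places us exactly in the setting of Theorem \ref{teoprinc}. Since moreover $F$ has order continuous norm whose order is defined by a basis, the additional hypothesis needed for the implication (3)$\Rightarrow$(1) of that theorem is available; hence all three conditions of Theorem \ref{teoprinc} are equivalent. Reading its conditions (1) and (2) in the present notation, this already yields the equivalence between our (2), that every positive operator $E\to F$ is compact, and our (3), that every positive operator $E\to F$ attains its norm.

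It then remains to link condition (1) to these. Here I would invoke \cite[Theorem 4.3]{botmirru} with $n=1$, which under the present structural assumptions characterizes the absence of a copy of $c_0$ in $\mathcal{L}^r(E; F)$ by the coincidence $\mathcal{L}^r(E; F) = \mathcal{K}^r(E; F)$. To connect this with our (2), I would note the routine fact that, $F$ being Dedekind complete, every regular operator is a difference of two positive operators while the compact operators form a linear subspace; consequently every positive operator $E\to F$ is compact if and only if every regular operator $E\to F$ is compact, that is, if and only if $\mathcal{K}^r(E; F) = \mathcal{L}^r(E; F)$. This identifies (1) with (2), and combined with the previous paragraph gives (1)$\Leftrightarrow$(2)$\Leftrightarrow$(3).

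The main obstacle I anticipate is not a genuine difficulty but a matter of hypothesis-matching: one must verify that the precise assumptions of \cite[Theorem 4.3]{botmirru} for $n=1$ are exactly those imposed here. In particular I would pay attention to the role of the infinite-dimensionality of $F$, which is unused by Theorem \ref{teoprinc} and must therefore be what feeds the $c_0$-characterization, and to the order continuity and basis structure of $F$; I would also make sure that the conclusion of that theorem is phrased in terms of regular (rather than merely positive) operators, which is precisely why the elementary bridging observation relating positive-compact and regular-compact coincidence is required.
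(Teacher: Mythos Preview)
Your proposal is correct and follows essentially the same route as the paper: combine Theorem \ref{teoprinc} for (2)$\Leftrightarrow$(3) with \cite[Theorem 4.3]{botmirru} (case $n=1$) for (1)$\Leftrightarrow$(2). The paper's proof is a single sentence that pins down exactly the hypothesis-matching you anticipated: reflexive Banach lattices fail the dual positive Schur property, and a Banach lattice whose order is given by a basis is atomic---these are the structural conditions needed to invoke \cite[Theorem 4.3]{botmirru}.
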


\begin{proof}
    Notice that we may apply \ref{teoprinc} and \cite[Theorem 4.3]{botmirru}, because reflexive Banach spaces fail the dual positive Schur property and Banach lattices whose order are defined by a basis are atomic.
\end{proof}

\section{The positive weak maximizing property}

The purpose of this section is to introduce and investigate a positive version of the so-called weak maximizing property. Recall that a pair of Banach spaces $(X, Y)$ is said to have the weak maximizing property if a bounded linear operator $T: X \to Y$ is norm-attaining, whenever there exists a non-weakly null maximizing sequence for $T$. By a maximizing sequence for a bounded linear operator $T$ we mean a sequence $(x_n)_n \subset S_X$ such that $\displaystyle \lim_{n \to \infty} \norma{T(x_n)} = \norma{T}$. This property was introduced in \cite{arongarciapelteix}, and then studied in \cite{dantasjungcerv, garcia-lirola, jung}. In the environment of Banach lattices, the following definition arises naturally by considering positive sequences in the definition of the $WMP$.

\begin{definition} \label{wmp1}
    We say that a pair of Banach lattices $(E, F)$ has the \textbf{positive weak maximizing property} ($WMP^+$) if a positive operator $T: E \to F$ is  norm-attaining whenever there exists a positive non-weakly null maximizing sequence of $T$.
\end{definition}

\begin{examples}\label{ex3}{\rm (1)} It is easy to check that if $(E, F)$ has the $WMP$, then it has the $WMP^+$.\\ 
{\rm (2)} If every positive operator from $E$ into $F$ is norm-attaining, then $(E, F)$ has the $WMP^+$. So, as noticed in Remarks \ref{obs}: 

{\rm (i)} $(\ell_p, L_q(\mu))$ has the $WMP^+$ whenever $1 \leq q < p \leq 2$. 

{\rm (ii)} $(L_p(\mu), \ell_q)$ has the $WMP^+$ whenever $2 \leq q < p < \infty$. 

{\rm (iii)} $(E, F)$ has the $WMP^+$ whenever $E$ is an $AM$-space with an order unit. \\
{\rm (3)} As noticed in the proof of \cite[Theorem 3.2]{dantasjungcerv}, whenever $p > 2$, the pair $(L_p[0,1], \ell_2)$ fails the $WMP$, while it has the $WMP^+$ by item {\rm (2)(ii)}.  Moreover, if $E$ is an $AM$-space with an order unit, then $E$ is not reflexive, and by \cite[Corollary 2.5]{arongarciapelteix}, $(E, Y)$ cannot have the $WMP$ for any Banach space $Y$, which is a contrast with item {\rm (2)(iii)}.
After the proof of Theorem \ref{wmp5}, we will present another example of pair $(E, F)$ with the $WMP^+$, failing the $WMP$. 
\end{examples}

We begin our discussion by proving some elementary facts about the behavior of the $WMP^+$ considering closed sublattices and projection bands.

\begin{proposition} \label{wmp4}
    Given two Banach lattices $E$ and $F$, the following conditions hold: \\
    {\rm (a)} If $(E, F)$ has the $WMP^+$, then so does $(E, F_1)$ for every closed sublattice $F_1$ of $F$. \\
 {\rm (b)} If $(E, F)$ has the $WMP^+$, then so does $(E_1, F)$ for every
sublattice $E_1$ of $E$ that is the range of a positive projection. \\
   {\rm (c)} If there exists a non-norm attaining positive operator $T: E \to F$ with $\norma{T} = 1$, then  $(E \oplus_p \R, F \oplus_q \R)$ fails the $WMP^+$ for all $1 \leq q < p < \infty$.
\end{proposition}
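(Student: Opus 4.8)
The plan is to handle (a) and (b) by a common ``transfer'' principle and (c) by an explicit construction. Recall from Definition~\ref{wmp1} that to verify $WMP^+$ for a pair one starts with a positive operator admitting a positive non-weakly-null maximizing sequence and must exhibit a point where it attains its norm. In both (a) and (b) I would manufacture, from the given operator, a positive operator on the pair $(E,F)$ known to have $WMP^+$, arranged to have the \emph{same} norm and to retain the \emph{same} maximizing sequence; applying the hypothesis and pulling the maximizer back then finishes the argument. For (a), let $\iota\colon F_1\hookrightarrow F$ be the inclusion of the closed sublattice, which is a positive isometry, and given $S\colon E\to F_1$ positive with a positive non-weakly-null maximizing sequence $(x_n)\subset S_E$, consider $\iota\circ S\colon E\to F$. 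It is positive, $\norma{(\iota\circ S)(x)}=\norma{S(x)}$ for all $x$, so $\norma{\iota\circ S}=\norma{S}$ and $(x_n)$ is still maximizing; positivity and non-weak-nullity are properties of $(x_n)$ in the unchanged domain $E$. The $WMP^+$ of $(E,F)$ produces $x_0\in S_E$ with $\norma{(\iota\circ S)(x_0)}=\norma{\iota\circ S}$, i.e. $\norma{S(x_0)}=\norma{S}$. This part is routine.

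For (b), let $P\colon E\to E_1$ be the positive projection and set $\hat S=S\circ P\colon E\to F$, again positive. Given a positive non-weakly-null maximizing sequence $(x_n)\subset S_{E_1}$ for $S$, observe $\hat S(x_n)=S(x_n)$ because $P|_{E_1}=\mathrm{id}$, and that $(x_n)$ remains non-weakly-null in $E$ since every functional in $E_1^\ast$ extends to $E^\ast$ by Hahn--Banach. Applying $WMP^+$ to $\hat S$ yields $x_0\in S_E$ with $\norma{S(Px_0)}=\norma{\hat S}$, and setting $z=Px_0\in E_1$ I would conclude that $S$ attains its norm at $z/\norma{z}$. The main obstacle here is the norm bookkeeping: the argument needs both $\norma{\hat S}=\norma{S}$ and $\norma{z}\le 1$, and these follow from $\norma{P}\le 1$ via the inclusions $B_{E_1}\subseteq P(B_E)\subseteq\norma{P}\,B_{E_1}$. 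I would therefore use that the positive projection onto $E_1$ is contractive (as for band projections); this contractivity is exactly where a general, non-contractive positive projection would break the estimate $\norma{\hat S}=\norma{S}$.

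For (c), given a positive non-norm-attaining $T\colon E\to F$ with $\norma{T}=1$, I would define $\hat T\colon E\oplus_p\R\to F\oplus_q\R$ by $\hat T(x,t)=(Tx,t)$, which is positive and bounded for the coordinatewise order. Writing $a=\norma{x}$, $b=|t|$ and using $\norma{Tx}\le\norma{x}$, one optimizes $(a^q+b^q)^{1/q}$ over $a^p+b^p=1$ to obtain $\norma{\hat T}=2^{1/q-1/p}$, a Lagrange-multiplier computation whose maximum sits at the balanced point $a=b=2^{-1/p}$ and which relies on $q<p$. A maximizing sequence is $z_n=(2^{-1/p}\xi_n,\,2^{-1/p})$, where $(\xi_n)\subset S_E^+$ is a positive maximizing sequence for $T$ (obtained by replacing any maximizing sequence by its moduli, using $T\ge 0$); this $z_n$ is positive and non-weakly-null, as its second coordinate is the fixed nonzero scalar $2^{-1/p}$. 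Finally, $\hat T$ does not attain its norm: equality in $\norma{\hat T(x,t)}=2^{1/q-1/p}$ would force equality both in $\norma{Tx}\le\norma{x}$ (whence $x=0$, since $T$ never attains its norm) and in the power-mean inequality (whence $\norma{x}=|t|$), which are incompatible with $\norma{(x,t)}=1$. Thus $(E\oplus_p\R,\,F\oplus_q\R)$ fails $WMP^+$. I expect the equality-case analysis and the essential use of the strict inequality $q<p$ to be the only delicate points of this last part.
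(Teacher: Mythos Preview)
Your approach to (a) and (b) is essentially the same as the paper's: compose with the inclusion $\iota\colon F_1\hookrightarrow F$ (for (a)) or with the projection $P$ (for (b)), observe that the composed operator is positive with the same norm and the same positive non-weakly-null maximizing sequence, apply the $WMP^+$ hypothesis on $(E,F)$, and pull the maximizer back. Your justification that $(x_n)$ remains non-weakly-null in $E$ via Hahn--Banach is correct.

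On (b) you are right to isolate the contractivity of $P$ as the crux: the equalities $\norma{\hat S}=\norma{S}$ and $\norma{Px_0}\le 1$ both rest on $\norma{P}\le 1$, and positive projections on Banach lattices are not contractive in general (e.g.\ $P(a,b)=(a+b,0)$ on $\ell_\infty^2$). The paper's proof asserts $\norma{T\circ P}=\norma{T}$ and that $T$ attains its norm at $P(x)$ without commenting on this point, so your argument is not weaker than the paper's here; you have simply made explicit an assumption the paper leaves implicit.

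For (c) your route is more self-contained than the paper's. The paper defines the same operator $S(x,\lambda)=(Tx,\lambda)$, notes that a maximizing sequence for $T$ can be taken positive by passing to moduli, and then writes that ``the proof of \cite[Main theorem]{dantasjungcerv} yields that $S$ is a non-norm attaining operator with a positive non-weakly null maximizing sequence,'' deferring the computation entirely to that reference. You instead carry out the optimization directly: the identification $\norma{\hat T}=2^{1/q-1/p}$ via the constrained maximum of $(a^q+b^q)^{1/q}$ on $a^p+b^p=1$ (attained only at $a=b=2^{-1/p}$ because $q<p$), the explicit maximizing sequence $z_n=(2^{-1/p}\xi_n,2^{-1/p})$ with fixed nonzero second coordinate, and the equality-case analysis showing that attainment would force $\norma{Tx}=\norma{x}$ with $\norma{x}=|t|=2^{-1/p}\neq 0$, contradicting the non-attainment of $T$. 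All of this is correct; the gain is that your argument is independent of the cited reference, at the cost of a short calculation the paper avoids.
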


\begin{proof}
    (a) Let $T: E \to F_1$ be a positive operator with a positive non-weakly null maximizing sequence $(x_n)_n \subset E$. Considering the canonical inclusion $j: F_1 \to F$, we get that $j$ is a positive operator, and hence $S = j \circ T: E \to F$ is also a positive operator with a positive non-weakly null maximizing sequence $(x_n)_n$:
    $$ \norma{S} = \norma{T} = \lim \norma{T(x_n)} = \lim \norma{j(T(x_n))} = \lim \norma{S(x_n)}. $$
    Since $(E, F)$ has the $WMP^+$, we have that $S$ attains its norm on a positive vector $x \in S_{E^+}$, which implies that
    $ \norma{T} = \norma{S} = \norma{S(x)} = \norma{j(T(x))} = \norma{T(x)}, $
    proving that $(E, F_1)$ has the $WMP^+$.

    (b) Let $T: E_1 \to F$ be a positive operator with a positive non-weakly null maximizing sequence $(x_n)_n \subset E_1$.
    By the assumption, there exists a positive projection $P: E \to E$ such that $P(E) = E_1$. Thus, defining $S = T \circ P: E \to F$, we get that $S$ is a positive operator such that $\norma{S} = \norma{T}$. On the other hand, since $(x_n)_n$ is also a non-weakly null sequence in $E$, we have that $(x_n)_n$ is a non-weakly null maximizing sequence of $S$. Thus, the assumption on $(E, F)$ having the $WMP^+$ yields that $S$ attains its norm on a positive vector $x \in S_{E^+}$, which implies that $T$ attains its norm on the positive vector $P(x)$.

    (c) Define $S: E \oplus_p \R \to F \oplus_q \R$ by $S(x, a) = (Tx, a)$ for every $x \in E$ and $a \in \R$. As $T \geq 0$, $S \geq 0$. 
    Moreover, there exists a positive maximizing sequence $(x_n)_n$ of $T$. Indeed, if $(z_n)_n$ is a maximizing sequence of $T$, then $(|z_n|)_n$ is also a maximizing sequence of $T$ because the inequality $\norma{Tz_n} \leq \norma{T |z_n|} \leq \norma{T} = 1$ holds for every $n \in \N$. Thus, the proof of \cite[Main theorem]{dantasjungcerv} yields that $S$ is a non-norm attaining operator with a positive non-weakly null maximizing sequence. Therefore, $(E \oplus_p \R, F \oplus_q \R)$ fails the $WMP^+$.
\end{proof}


\begin{proposition} \label{wmp3}
    A Banach lattice with order continuous norm $E$ is reflexive if and only if the pair $(E, F)$ has the $WMP^+$ for some Banach lattice $F \neq \{0\}$.
\end{proposition}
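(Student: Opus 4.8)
The plan is to prove the two implications separately, with the reverse (harder) direction resting on the order continuous lattice version of James' theorem recalled in the Introduction.

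For the forward implication I would simply take $F = \R$. Positive operators $E \to \R$ are exactly the positive functionals $(E^\ast)^+$, and since $E$ is reflexive its closed unit ball $B_E$ is weakly compact, so every functional---in particular every positive one---attains its norm. Hence the implication defining the $WMP^+$ for the pair $(E, \R)$ holds trivially, its conclusion (norm attainment) being satisfied by every positive functional, regardless of whether a positive non-weakly null maximizing sequence exists. Thus $(E, \R)$ has the $WMP^+$ and $\R \neq \{0\}$, which is all that is required. Note that order continuity is not needed for this direction.

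For the reverse implication I would argue by contraposition: assuming $E$ is not reflexive, I would show that $(E, F)$ fails the $WMP^+$ for every Banach lattice $F \neq \{0\}$. The key first step is exactly where order continuity enters: by the lattice James theorem (\cite[Lemma 3.5]{jileebu}, \cite[Proposition 19.27]{oikhbergtursi}), a non-reflexive order continuous Banach lattice admits a positive functional $f \in S_{E^\ast}^+$ that does not attain its norm. Fixing any $y_0 \in S_F^+$, I would then consider the rank-one operator $T \colon E \to F$ given by $T(x) = f(x)\, y_0$. Since $f \geq 0$ and $y_0 \geq 0$, the operator $T$ is positive, $\norma{T} = \norma{f}\,\norma{y_0} = 1$, and $T$ cannot attain its norm: as $\norma{T(x)} = |f(x)|$, the equality $\norma{T(x)} = 1$ for some $x \in B_E$ would force $f$ (or $-f$) to attain its norm, a contradiction.

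It remains to produce a \emph{positive} non-weakly null maximizing sequence for $T$, and this is the one point requiring a small argument. Starting from any maximizing sequence $(x_n)_n \subset S_E$ for $f$, I would replace it by $(|x_n|)_n$: positivity of $f$ yields $f(x_n) \leq |f(x_n)| \leq f(|x_n|) \leq \norma{f} = 1$, so $f(|x_n|) \to 1$ while $\norma{|x_n|} = \norma{x_n} = 1$, making $(|x_n|)_n$ a positive maximizing sequence for both $f$ and $T$. This sequence is non-weakly null, since $|x_n| \cvf 0$ would give $f(|x_n|) \to 0 \neq 1$. Consequently $T$ is a positive, non-norm-attaining operator possessing a positive non-weakly null maximizing sequence, so $(E, F)$ fails the $WMP^+$, completing the contrapositive. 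I expect the only genuinely delicate point to be arranging positivity and non-weak-nullity of the maximizing sequence simultaneously, which the passage to absolute values resolves; the essential structural input is the lattice James theorem, which is precisely where order continuity of $E$ is used.
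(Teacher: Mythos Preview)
Your proof is correct and follows essentially the same approach as the paper: both directions use the same ingredients (James' theorem for the forward direction with $F=\R$, and for the reverse direction the lattice James theorem to obtain a positive non-norm-attaining functional $f$, tensored with some $y_0\in S_F^+$ to produce the rank-one witness $T$). The only cosmetic difference is that you explicitly construct the positive non-weakly null maximizing sequence $(|x_n|)_n$ for $T$, whereas the paper phrases the same step contrapositively---assuming $(E,F)$ has the $WMP^+$, every maximizing sequence for $T$ must be absolutely weakly null, forcing $\norma{T}=\lim\norma{T(x_n)}=0$---but the underlying idea is identical.
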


\begin{proof}
    Assume that the pair $(E, F)$ has the $WMP^+$ for some Banach lattice $F \neq \{0\}$.
    If $E$ is a non-reflexive Banach lattice, there exists a positive linear functional $x^\ast \in E^\ast$ that does not attain its norm (see \cite[Proposition 19.27]{oikhbergtursi}). Take $y \in S_{F^+}$ and define $T: E \to F$ by $T(x) = x^\ast(x) y$. In particular, $T$ is a positive  which is not norm attaining. Thus, the assumption implies that every maximizing sequence $(x_n)_n$ for $T$ is absolutely weakly null, and hence $T(x_n) \to 0$ in $F$ because $T$ is compact. For such a sequence $(x_n)_n$ we get that
    \begin{align*}
        \norma{T} & = \lim_{n \to \infty} \norma{T(x_n)} = 0,
    \end{align*}
    which is a contradiction.
    The converse is immediate, because every linear functional on a reflexive space attains its norm. 
\end{proof}

If $E$ is an infinite dimensional  $AM$-space with an order unit, then $E$ cannot have order continuous norm and $(E, F)$ has the $WMP^+$ for every Banach lattice $F$ by Examples \ref{ex3}(2)(iii). This shows that Proposition \ref{wmp3} fails without assuming that $E$ has order continuous norm.

Now, we prove that the positive non-weakly null maximizing sequence in the definition of the $WMP^+$ may be replaced by a non-absolutely weakly null maximizing sequence.

\begin{proposition} \label{wmp2}
    A pair of Banach lattices  $(E, F)$ has the $WMP^+$ if and only if a positive operator $T: E \to F$ is  norm-attaining whenever there exists a non-absolutely weakly null maximizing sequence of $T$.
\end{proposition}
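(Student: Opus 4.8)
The plan is to reduce the whole equivalence to one elementary observation: for a \emph{positive} sequence, being weakly null and being absolutely weakly null coincide. I would record this first. If $(y_n)_n \subset E^+$, then $|y_n| = y_n$, so $(y_n)_n$ is absolutely weakly null exactly when $|f|(y_n) \to 0$ for every $f \in E^\ast$. Since each $|f|$ is a positive functional, this forces $g(y_n) \to 0$ for every $g \in E^\ast$ (taking $g = |f|$); conversely $|f(y_n)| \le |f|(y_n)$ shows the reverse implication once we test against all functionals. Hence, for positive sequences, non-weakly null and non-absolutely weakly null are the same condition, and this is what drives both implications.

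For the forward direction, I would assume $(E,F)$ has the $WMP^+$ and take a positive operator $T\colon E \to F$ admitting a non-absolutely weakly null maximizing sequence $(x_n)_n \subset S_E$. The natural move is to pass to $(|x_n|)_n$, which lies in $S_{E}^+$. The positivity estimate $|T(x_n)| \le T(|x_n|)$ (already used just before Example \ref{ex2}) gives $\norma{T(x_n)} \le \norma{T(|x_n|)} \le \norma{T}$, so $(|x_n|)_n$ is again a maximizing sequence. Moreover, $(x_n)_n$ being non-absolutely weakly null means $|f|(|x_n|) \not\to 0$ for some $f \in E^\ast$; since $\bigl| |x_n| \bigr| = |x_n|$, the sequence $(|x_n|)_n$ is itself non-absolutely weakly null, hence non-weakly null by the opening observation. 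Thus $(|x_n|)_n$ is a positive non-weakly null maximizing sequence of $T$, and the $WMP^+$ yields that $T$ attains its norm.

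The converse is immediate and essentially symmetric: if a positive $T$ admits a positive non-weakly null maximizing sequence $(x_n)_n$, then by the opening observation this very sequence is non-absolutely weakly null, so the assumed property applies directly and $T$ attains its norm. The argument is short; the only point requiring care is verifying that replacing $(x_n)_n$ by $(|x_n|)_n$ simultaneously preserves the maximizing property (which is exactly where positivity of $T$ enters, through $|T(x_n)| \le T(|x_n|)$) and the failure of absolute weak nullity, and here the identity $\bigl| |x_n| \bigr| = |x_n|$ is essential. I do not anticipate a genuine obstacle beyond assembling these observations cleanly.
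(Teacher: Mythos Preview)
Your proposal is correct and follows essentially the same approach as the paper: in both directions one passes from a maximizing sequence $(x_n)_n$ to $(|x_n|)_n$, uses $|T(x_n)| \le T(|x_n|)$ to keep the maximizing property, and exploits that for positive sequences weak nullity and absolute weak nullity coincide. The only difference is presentational---you isolate that last observation at the outset, while the paper invokes it implicitly within each implication.
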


\begin{proof}
    Assume that $(E, F)$ has the $WMP^+$.  Let $T: E \to F$ be a positive operator and $(x_n)_n \subset E$ be a non-absolutely weakly null maximizing sequence of $T$, that is $(x_n)_n \subset S_E$ is a non-absolutely weakly null sequence such that $\displaystyle \lim_{n \to \infty} \norma{T(x_n)} = \norma{T}$. In particular, $(|x_n|)_n$ is a positive non-weakly null normalized sequence such that $|T(x_n)| \leq T(|x_n|)$ for every $n \in \N$, which implies that
    $ \norma{T(x_n)} \leq \norma{T(|x_n|)} \leq \norma{T} $
    for every $n \in \N$. Taking $n \to \infty$ in the above inequality, we get that $\displaystyle \lim_{n \to \infty} \norma{T(|x_n|)} = \norma{T}$. By the assumption, we obtain that $T$ is norm-attaining.

    Suppose now that every positive operator $T: E \to F$ is norm-attaining whenever there exists a non-absolutely weakly null maximizing sequence of $T$. To prove that $(E, F)$ has the $WMP^+$, let $T: E \to F$ be a positive operator and let $(x_n)_n$ be a positive non-weakly null maximizing sequence of $T$. Since $(x_n)_n$ is a positive sequence, we get that it cannot be absolutely weakly null as well, proving the thesis. 
\end{proof}

It was proven by S. Dantas, M. Jung and G. Martínez-Cervantes that a Banach space $Y$ has the Schur property if and only if the pair $(X, Y)$ has the $WMP$ for every reflexive space $X$ (see \cite[Theorem 3.5]{dantasjungcerv}). It is a natural question to seek if there exists a similar characterization for the $WMP^+$ considering the positive Schur property instead of the Schur property.

Recall that a Banach lattice $E$ has the positive Schur property if positive (or, equivalently, disjoint or positive disjoint) weakly null sequences in $E$ are norm null. This property was introduced by W. Wnuk \cite{wnukglasgow, wnuksurv} and 
F. R\"abiger \cite{rabiger}, and has been extensively studied for many experts, recent developments can be found, e.g., in \cite{ardakani, baklouti, botelhobu, botelholuiz, botlumir,  pedro}. In the next, we give a sufficient condition so that the pair $(E, F)$ has the $WMP^+$ whenever $F$ has the positive Schur property.

\begin{theorem} \label{wmp5}
If $F$ has the positive Schur property, then the pair $(E, F)$ satisfies the $WMP^+$ for every  Banach lattice $E$ such that $B_E$ is sequentially absolutely weakly compact.
\end{theorem}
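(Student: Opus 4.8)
The plan is to verify the defining condition of the $WMP^+$ directly: given a positive operator $T : E \to F$ admitting a positive non-weakly null maximizing sequence $(x_n)_n \subset S_E$, I would produce a point of $S_E^+$ at which $T$ attains its norm. Since $(x_n)_n \subset B_E$ and $B_E$ is sequentially absolutely weakly compact, I would first extract a subsequence $(x_{n_k})_k$ that is $|\sigma|(E, E^\ast)$-convergent to some $x \in B_E$, i.e. $|f|(|x_{n_k} - x|) \to 0$ for every $f \in E^\ast$. Two immediate consequences I would record: absolute weak convergence forces ordinary weak convergence $x_{n_k} \to x$ in $\sigma(E,E^\ast)$ (because $|f(x_{n_k}-x)| \le |f|(|x_{n_k}-x|)$), so $x \in E^+$ since the positive cone is weakly closed; and $\|x\| \le 1$ since $x \in B_E$.

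The crux of the argument is to upgrade the absolute weak convergence to norm convergence of the images under $T$, and this is where the positive Schur property of $F$ enters. I would observe that the positive sequence $(|x_{n_k} - x|)_k$ is weakly null in $E$: for $f \geq 0$ one has $f(|x_{n_k}-x|) = |f|(|x_{n_k}-x|) \to 0$, and the general case follows by splitting $f = f^+ - f^-$ and dominating by $|f|$. Applying the positive operator $T$ then yields a positive, weakly null sequence $(T(|x_{n_k}-x|))_k$ in $F$, so the positive Schur property gives $\|T(|x_{n_k}-x|)\| \to 0$. The order bound $|T(x_{n_k}) - T(x)| = |T(x_{n_k}-x)| \le T(|x_{n_k}-x|)$ together with monotonicity of the lattice norm then forces $\|T(x_{n_k}) - T(x)\| \to 0$, i.e. $T(x_{n_k}) \to T(x)$ in norm.

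Finally, since $(x_{n_k})_k$ is still maximizing, passing to the limit gives $\|T(x)\| = \lim_k \|T(x_{n_k})\| = \|T\|$. If $T = 0$ there is nothing to prove; otherwise $\|T(x)\| = \|T\| > 0$ forces $x \neq 0$, and combining $\|T(x)\| \le \|T\|\,\|x\| \le \|T\|$ with $\|x\| \le 1$ yields $\|x\| = 1$. Thus $x \in S_E^+$ with $\|T(x)\| = \|T\|$, so $T$ attains its norm, establishing the $WMP^+$.

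I expect the main obstacle to be the middle step: correctly identifying $(|x_{n_k}-x|)_k$ as a \emph{positive weakly null} sequence (rather than merely absolutely weakly null) so that the positive Schur property applies, and then transferring norm convergence through the lattice domination $|T(x_{n_k}-x)| \le T(|x_{n_k}-x|)$; everything else is soft. It is worth noting that the non-weakly null hypothesis in the definition of the $WMP^+$ is never invoked in this argument, which in fact shows that under these hypotheses \emph{every} positive operator $T : E \to F$ attains its norm, so that the $WMP^+$ holds a fortiori.
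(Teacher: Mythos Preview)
Your proof is correct and follows essentially the same route as the paper's: extract an $|\sigma|(E,E^\ast)$-convergent subsequence, use the positive Schur property on the positive weakly null sequence $(T|x_{n_k}-x|)_k$, and dominate $|T(x_{n_k})-T(x)|$ by $T(|x_{n_k}-x|)$ to conclude. Your additional verification that $x\in S_E^+$ and your remark that the non-weakly null hypothesis is never invoked (so the argument actually shows every positive $T:E\to F$ attains its norm) are both valid; the paper's proof likewise makes no use of that hypothesis.
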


\begin{proof}
     Suppose that $F$ has the positive Schur property and let $E$ be Banach lattice such that $B_E$ is sequentially absolutely weakly compact. To prove that $(E, F)$ has the $WMP^+$, let $T: E \to F$ be a positive operator and let $(x_n)_n \subset S_{E^+}$ be a non-weakly null sequence such that $\displaystyle \lim_{n \to \infty} \norma{T(x_n)} = \norma{T}$. Since $B_{E}$ is sequentially absolutely weakly compact, there exist a subsequence $(x_{n_k})_k$ and $x \in B_{E}$ such that $x_{n_k} \cvfa x$ in $E$, that is $|x_{n_k} - x| \cvf 0$. Thus $(T|x_{n_k} - x|)_k$ is a positive weakly null sequence in $F$, and by the assumption 
    $$\left |\norma{T(x_{n_k})} - \norma{T(x)} \right | \leq \norma{T(x_{n_k}) - T(x)} \leq \norma{T|x_{n_k} - x|} \to 0,$$
    which implies that $\displaystyle \norma{T(x)} = \lim_{k \to \infty} \norma{T(x_{n_k})} = \norma{T}$.
    Therefore, $T$ is a norm-attaining positive operator, and hence $(E, F)$ has the $WMP^+$. 
\end{proof}

Theorem \ref{wmp5} allows us to present another example of a pair of Banach lattices $(E, F)$ with the $WMP^+$ that fails the $WMP$. Indeed, it was noticed in the proof of \cite[Theorem 3.2]{dantasjungcerv} that $(\ell_2, L_1[0,1])$ cannot have the $WMP$. However, since $L_1[0,1]$ has the positive Schur property and $B_{\ell_2}$ is sequentially absolutely weakly compact set by \cite[Proposition 2.10 and Theorem 2.9]{botlumir}.

As an application of the Eberlein-Smulian theorem, it follows that if $B_E$ is a sequentially absolutely weakly compact set, then it is weakly compact, and hence $E$ is a reflexive Banach lattice. The converse is not necessarily true (see \cite[Example 2.7]{botlumir}).
We conclude this Section by proving a partial converse of Theorem \ref{wmp5}. First, we need to prove a lemma which can be seen as a positive version of the famous Davis – Figiel – Johnson – Pelczynski's theorem (see \cite[Theorem 5.38]{alip}) and as a ``stronger" version of \cite[Theorem 5.42]{alip} for positive operators.

\begin{lemma} \label{wmp6}
    Let $E$ and $F$ be two Banach lattices with $F$ having order continuous norm. If $T: E \to F$ is a positive weakly compact operator, then it factors positively through a reflexive Banach lattice.
\end{lemma}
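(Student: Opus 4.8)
The plan is to adapt the Davis–Figiel–Johnson–Pelczynski (DFJP) interpolation scheme \cite[Theorem 5.38]{alip}, arranging that every object it produces is compatible with the order structure. The guiding observation is twofold: the DFJP space built from a weakly compact symmetric convex set $W$ is automatically a Banach lattice as soon as $W$ is \emph{solid}, and the two factoring maps are automatically positive once $T$ is. So the whole proof reduces to choosing $W$ to be simultaneously convex, solid, and weakly compact, and then checking that solidity propagates through the construction.

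First I would produce the right generating set. Since $T$ is weakly compact, $T(B_E)$ is relatively weakly compact, and I would pass to its solid hull $\sol{T(B_E)}$, which is then \emph{also} relatively weakly compact; this is precisely the step where the hypothesis that $F$ has order continuous norm enters, since in an order continuous Banach lattice the solid hull of a relatively weakly compact set is relatively weakly compact. I would then set
$$ W = \overline{\text{co}}\left( \sol{T(B_E)} \right). $$
By Krein's theorem the closed convex hull of a relatively weakly compact set is weakly compact, so $W$ is weakly compact; it is bounded and contains $0$, and since both the convex hull and the norm closure of a solid set are again solid, $W$ is a convex, solid (hence symmetric) weakly compact set containing $T(B_E)$.

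Next I would run the DFJP machine on $W$. For $n \in \N$ set $U_n = 2^n W + 2^{-n} B_F$, let $\norma{\cdot}_n$ be the Minkowski gauge of $U_n$, and define
$$ \norma{y} = \left( \sum_{n=1}^\infty \norma{y}_n^2 \right)^{1/2}, \qquad G = \conj{y \in F}{\norma{y} < \infty}. $$
The standard arguments give that $(G, \norma{\cdot})$ is a Banach space, that the inclusion $j \colon G \to F$ is bounded, that $W \subset B_G$ (so that the corestriction $\tilde T \colon E \to G$ of $T$ is a bounded operator with $T = j \circ \tilde T$), and, crucially, that the weak compactness of $W$ forces $G$ to be reflexive. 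The lattice refinement is that, $W$ and $B_F$ being solid, each $U_n$ is solid (the sum of two solid sets is solid, by the Riesz decomposition property), hence each gauge $\norma{\cdot}_n$ is a lattice seminorm and so is $\norma{\cdot}$; in particular $G$ is stable under $|\cdot|$ with $\norma{|y|} = \norma{y}$, so $G$ is a Banach lattice whose order is the one inherited from $F$.

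Finally I would verify positivity of the factorization: the inclusion $j$ is positive because the order of $G$ is induced from $F$, and $\tilde T$ is positive because for $x \in E^+$ one has $\tilde T(x) = T(x) \in F^+ \cap G = G^+$. Thus $T = j \circ \tilde T$ factors positively through the reflexive Banach lattice $G$. I expect the main obstacle to be the weak compactness of the solid convex hull $W$: this is the only place that genuinely exploits order continuity of $F$, and it is exactly what makes the classical DFJP reflexivity argument go through while preserving solidity at every stage.
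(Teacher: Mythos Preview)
Your proof is correct and follows essentially the same approach as the paper. The paper takes $W = \sol{T(B_{E^+})}$, invokes \cite[Theorem~5.41]{alip} as a black box to obtain the Banach lattice $\Psi$ (the DFJP space built from a convex solid bounded set), and then uses order continuity of $F$ together with \cite[Theorems~4.9 and~4.39(1)]{alip} to get relative weak compactness of $W$ and hence reflexivity of $\Psi$ via \cite[Theorem~5.37(4)]{alip}; you simply unpack that black box explicitly, verifying by hand that solidity of $W$ propagates to each $U_n$ and hence to the interpolation norm. The only cosmetic differences are your choice of $W = \overline{\text{co}}(\sol{T(B_E)})$ rather than $\sol{T(B_{E^+})}$, and the minor imprecision that one gets $W \subset C \cdot B_G$ for some constant $C$ rather than $W \subset B_G$, neither of which affects the argument.
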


\begin{proof}
    Letting $W = \sol{T(B_{E^+})}$, we get that $W$ is a convex, solid, and norm bounded subset of $F$ [\cite{oikhbergtursi}, Corollary 19.4(2)]. Thus, by \cite[Theorem 5.41]{alip}, there exists a Banach lattice $\Psi$ which is an ideal of $E$. Moreover, since $T$ is a positive weakly compact operator, $T(B_{E^+})$ is relatively weakly compact subset of $F^+$. Thus the order continuity of the norm in $F$ implies that $W$ is a relatively weakly compact set in $F$ (see \cite[Theorems 4.9 and 4.39(1)]{alip}), which yields by \cite[Theorem 5.37(4)]{alip} that $\Psi$ is reflexive. On the other hand, since $W \subset \Psi$ (see \cite[Theorem 5.37(2)]{alip}), we obtain that the range of $T$ is contained in $\Psi$,
    which allows us to define $S: E \to \Psi$ by $S(x) = T(x)$ for every $x \in E$. In particular, $S$ is a positive operator such that $T = J \circ S$, where $J: \Psi \to F$ is the canonical embedding, proving that $T$ factors positively through the reflexive Banach lattice $\Psi$.
\end{proof}

We conclude our discussion by applying Theorem \ref{teoprinc} and Lemma \ref{wmp6} in order to prove partial converse of Theorem \ref{wmp5}.

\begin{theorem} \label{wmp7}
    Let $F$ be a Banach lattice with order continuous norm.
    If $F$ fails to have the positive Schur property, then there exists a reflexive Banach lattice $E$ such that $(E, F)$ does not have the $WMP^+$.
\end{theorem}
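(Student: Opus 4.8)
The plan is to manufacture, out of the failure of the positive Schur property, a positive operator from a reflexive Banach lattice into $F$ that does not attain its norm, and then to force it to admit a positive maximizing sequence that is not absolutely weakly null.

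First I would record that order continuity of the norm makes $F$ Dedekind complete, so that Theorem \ref{teoprinc} is available for pairs $(E,F)$ with $E$ reflexive. Since $F$ fails the positive Schur property, I fix a positive, pairwise disjoint, normalized sequence $(y_n)_n\subset F^+$ with $y_n\cvf 0$ witnessing this failure. Disjointness yields $\norma{y_n-y_m}\geq 1$ for $n\neq m$, while weak nullity makes the map $R\colon \ell_1\to F$, $Re_n=y_n$, a positive operator with $\norma{R}=1$ that is weakly compact (the image of the unit vector basis is relatively weakly compact, being a weakly null sequence together with $0$) but not compact (it has no norm-Cauchy subsequence). Since $R$ is positive, weakly compact, and $F$ has order continuous norm, Lemma \ref{wmp6} factors it positively as $R=J\circ S$ through a reflexive Banach lattice, which I take to be $E$; as $R$ is non-compact, the positive operator $J\colon E\to F$ is non-compact as well, so condition (1) of Theorem \ref{teoprinc} fails for $(E,F)$.

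Next, after securing the basis hypothesis of Theorem \ref{teoprinc} (this requires care: one wants the reflexive factor $E$ to carry a basis defining its order, or else to first pass to a closed sublattice of $F$ on which the weakly null action concentrates), the three conditions of Theorem \ref{teoprinc} become equivalent, whence condition (2) must fail as well. This produces a positive operator $T\colon E\to F$ that does not attain its norm, which I normalize to $\norma{T}_r=1$. I note that if some positive maximizing sequence of $T$ had a nonzero weak cluster point we would be finished immediately, since such a sequence is not weakly null and $T$ attains its norm nowhere; so the remaining work is exactly to \emph{force} a maximizing sequence with a fixed nonvanishing component.

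Finally — and this is the heart of the matter — I must upgrade ``$T$ does not attain its norm'' to ``$(E,F)$ fails the $WMP^+$'', that is, exhibit a non-norm-attaining positive operator together with a maximizing sequence that is not absolutely weakly null, in the sense of Proposition \ref{wmp2}. The mechanism I have in mind is the exponent-mismatch idea underlying Proposition \ref{wmp4}(c): adjoin one coordinate to the domain, forming $E\oplus_p\R$, and send it to a fixed positive unit vector $w=y_1$ chosen disjoint from the band carrying the weakly null part of the construction; with $1\le q<p$ the optimizing vectors are pushed to spread mass onto the fixed coordinate, so that a maximizing sequence retains a component bounded away from zero and is therefore not weakly null, while non-attainment persists. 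The hard part will be that a general $F$ does not provide the clean $\ell_q$-type identity for the norm of a disjoint sum that drives the computation in Proposition \ref{wmp4}(c); the delicate estimate is to control $\norma{\,|\alpha|\,w+|J(\psi)|\,}$ — using disjointness (which gives $|\alpha w+J\psi|=|\alpha|\,w+|J\psi|$), order continuity, and weak lower semicontinuity of the norm — well enough to certify simultaneously that the supremum defining the operator norm is not attained and that it is approached along vectors whose fixed coordinate does not vanish. I expect this estimate, rather than the factorization or the appeal to Theorem \ref{teoprinc}, to be the genuine obstacle.
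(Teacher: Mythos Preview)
Your outline has the right architecture --- factor through a reflexive lattice via Lemma \ref{wmp6}, invoke Theorem \ref{teoprinc} to obtain a positive non-attaining operator, then upgrade to a failure of $WMP^+$ --- but two of the three steps are left genuinely incomplete, and for the last step you are aiming at the wrong mechanism.

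\textbf{The basis hypothesis.} To use the implication (3)$\Rightarrow$(1) of Theorem \ref{teoprinc} you need the reflexive domain to have its order given by a basis (or $F$ to have such a basis, which is not assumed). The reflexive factor $\Psi$ produced by Lemma \ref{wmp6} has no reason to enjoy this. The paper resolves this not by ``passing to a closed sublattice of $F$'' but by replacing $\Psi$ with $E=\overline{[y_n:n\in\N]}\subset\Psi$ and \emph{redefining} the order on $E$ coordinate-wise via the unconditional basic sequence $(y_n)_n$; one then checks that $\norma{\cdot}_\Psi$ is a Riesz norm for this new order, so that $(E,\leq_E,\norma{\cdot}_\Psi)$ is a reflexive Banach lattice whose order is given by a basis. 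The restriction $J|_E$ is still positive (for the new order) and non-compact because $(Jy_n)_n=(y_n)_n$ has no norm-convergent subsequence in $F$. Your parenthetical ``this requires care'' is accurate, but the construction itself is the content, and it is missing.

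\textbf{The upgrade to failure of $WMP^+$.} This is far easier than you fear, and the exponent-mismatch device of Proposition \ref{wmp4}(c) --- together with the delicate estimate on $\norma{\,|\alpha|\,w+|J\psi|\,}$ that you flag as the main obstacle --- is simply not needed. The paper passes to $E\oplus_\infty\R$ and defines $\widetilde R(x,\lambda)=R(x)$, \emph{discarding} the extra coordinate entirely. Then $\norma{\widetilde R}=\norma{R}$; $\widetilde R$ cannot attain its norm (attainment at $(x_0,\lambda_0)$ would force $R$ to attain at $x_0$); and if $(x_n)_n\subset S_{E^+}$ is any positive maximizing sequence for $R$, then $((x_n,1))_n$ lies in $S_{(E\oplus_\infty\R)^+}$ because the $\ell_\infty$-sum norm is $\max(\norma{x_n},1)=1$, is maximizing for $\widetilde R$, and is manifestly not weakly null. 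No control over sums in $F$ is required, and the reflexive lattice witnessing failure of $WMP^+$ is $E\oplus_\infty\R$, not $E$ itself.
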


\begin{proof}
If $F$ fails to have the positive Schur property, there exists a positive disjoint weakly null sequence $(y_n)_n \subset S_{F^+}$. Defining $T: \ell_1 \to F$ by $T((a_j)_j) = \sum_{n=1}^\infty a_n y_n$, we obtain that $T$ is a positive weakly compact operator (see \cite[Theorem 5.26]{alip}). Note, also that $(T(e_n) = y_n)_n$ has no convergent subsequence, because it is normalized and weakly null.
Since $F$ has order continuous, we obtain from Lemma \ref{wmp6} that there exist a reflexive Banach lattice $\Psi$ and two positive operators $S: \ell_1 \to \Psi$ and $J: \Psi \to F$ such that $T = J \circ S$. 
Letting, for every $n \in \N$, $S(e_n) = T(e_n) = y_n \in \Psi$, $(J(y_n))_n$ has no convergent subsequence in $F$.
 As $(y_n)_n$ is positive normalized disjoint sequence, it is an unconditional basic sequence. Thus, defining $E = \overline{[y_{n} : n \in \N]}$, we have that $(E, \norma{\cdot}_{\Psi})$ is a reflexive Banach space with an unconditional basis $(y_{n})_n$.
 Letting
$$ \sum_{n=1}^\infty a_n y_{n} \leq_{E} \sum_{n=1}^\infty b_n y_{n} \Leftrightarrow a_n \leq b_n \, \text{ in $\R$ for every $n \in \N$}, $$
we get that $\leq_{E}$ defines a vector lattice structure in $E$. We denote the modulus of $\displaystyle\sum_{n=1}^\infty a_n y_{n}$ with this new order by $\displaystyle\left | \sum_{n=1}^\infty a_n y_{n} \right |_E$.
Let us see that $\norma{\cdot}_{\Psi}$ is a Riesz norm on $E$ with this partial order. Indeed, if 
$$ \sum_{n=1}^\infty |a_n| y_{n}  = \left |\sum_{n=1}^\infty a_n y_{n} \right |_E \leq_{E} \left | \sum_{n=1}^\infty b_n y _{n} \right |_E = \sum_{n=1}^\infty |b_n| y_{n},$$ we have by the definition that $|a_n| \leq |b_n|$ in $\R$ for every $n \in \N$, which implies that
$$ 0 \leq \sum_{n=1}^\infty |a_n| y_{n} \leq \sum_{n=1}^\infty |b_n| y_{n} \quad \text{in $\Psi$,}$$
and hence $\displaystyle \norma{\sum_{n=1}^\infty |a_n| y_{n}}_{\Psi} \leq \norma{\sum_{n=1}^\infty |b_n| y_{n} }_{\Psi}$ [because, $(\Psi, \norma{\cdot}_{\Psi})$ is a Banach lattice]. Consequently, $\displaystyle \norma{| \,\sum_{n=1}^\infty a_n y_{n} \, | }_{\Psi} \leq \norma{| \, \sum_{n=1}^\infty b_n y_{n}\, | }_{\Psi}$, proving that $(E, \norma{\cdot}_{\Psi})$ is a reflexive Banach lattice with the order given by a basis.

Also, $J|_{E}: E \to F$ is a non-compact positive operator, because $(J(y_n))_n$ does not have any convergent subsequence.  It follows by Theorem \ref{teoprinc} that there exists a positive non-norm attaining operator $R: E \to F$. Thus, defining $\widetilde{R}: E \oplus_\infty \R \to F$ by $\widetilde{R}(x, \lambda) = R(x)$ for every $(x, \lambda) \in E \oplus_\infty \R$, we have that $\widetilde{R}$ is a positive operator with $\norma{R} = \norma{\widetilde{R}}$ which does not attain its norm [If $\widetilde{R}$ attains its norm on $(x_0, \lambda_0)$, then $R$ would attain its norm on $x_0$]. However, if $(x_n)_n \subset  S_{E^+}$ is a maximizing sequence for $R$, then $((x_n, 1))_n$ is a positive maximizing sequence of $\widetilde{R}$ which is not weakly null in $E \oplus_\infty \R$. Therefore $E \oplus_\infty \R$ is a reflexive Banach lattice such that $(E \oplus_\infty \R, F)$ fails the $WMP^+$.
\end{proof}

\noindent \textbf{Acknowledgments:} José Lucas P. Luiz is supported by Fapemig Grant APQ-01853-23. Vinícius C. C. Miranda is supported by FAPESP grant and 2023/12916-1 Fapemig Grant APQ-01853-23

\noindent J. L. P. Luiz\\
Instituto Federal do Norte de Minas Gerais\\
Campus de Ara\c cua\'i\\
39.600-00 -- Ara\c cua\'i -- Brazil\\
e-mail: lucasvt09@hotmail.com

\medskip

\noindent V. C. C. Miranda\\
Centro de Matem\'atica, Computa\c c\~ao e Cogni\c c\~ao \\
Universidade Federal do ABC \\
09.210-580 -- Santo Andr\'e -- 		Brazil.  \\
e-mail: colferaiv@gmail.com

\end{document}